\begin{document}

\def\fl#1{\left\lfloor#1\right\rfloor}
\def\cl#1{\left\lceil#1\right\rceil}
\def\ang#1{\left\langle#1\right\rangle}
\def\stf#1#2{\left[#1\atop#2\right]} 
\def\sts#1#2{\left\{#1\atop#2\right\}}
\def\eul#1#2{\left\langle#1\atop#2\right\rangle}
\def\N{\mathbb N}
\def\Z{\mathbb Z}
\def\R{\mathbb R}
\def\C{\mathbb C}

\newtheorem{theorem}{Theorem}
\newtheorem{Prop}{Proposition}
\newtheorem{Cor}{Corollary}
\newtheorem{Lem}{Lemma}

\newenvironment{Rem}{\begin{trivlist} \item[\hskip \labelsep{\it
Remark.}]\setlength{\parindent}{0pt}}{\end{trivlist}}

\title{Sylvester sums on the Frobenius set in arithmetic progression with initial gaps
}
 
\author{
Takao Komatsu 
\\
\small Department of Mathematical Sciences, School of Science\\[-0.8ex]
\small Zhejiang Sci-Tech University\\[-0.8ex]
\small Hangzhou 310018 China\\[-0.8ex]
\small \texttt{komatsu@zstu.edu.cn}
}

\date{
\small MR Subject Classifications: Primary 11D07; Secondary 05A15, 05A17, 05A19, 11B68, 11D04, 11P81 
}

\maketitle
 
\begin{abstract} 
Let $a_1,a_2,\dots,a_k$ be positive integers with $\gcd(a_1,a_2,\dots,a_k)=1$. Frobenius number is the largest positive integer that is NOT representable in terms of $a_1,a_2,\dots,a_k$. When $k\ge 3$, there is no explicit formula in general, but some formulae may exist for special sequences $a_1,a_2,\dots,a_k$, including, those forming arithmetic progressions and their modifications.    
In this paper we give explicit formulae for the sum of nonrepresentable positive integers (Sylvester sum) as well as Frobenius numbers and the number of nonrepresentable positive integers (Sylverster number) for $a_1,a_2,\dots,a_k$ forming arithmetic progressions with initial gaps. 
\\
{\bf Keywords:} Frobenius problem, Frobenius numbers, Sylvester numbers, Sylvester sums, arithmetic sequences      
\end{abstract}

\section{Introduction}  

Let $a_1,\dots,a_k$ be positive integers with $\gcd(a_1,\dots,a_k)=1$. It is well-known that all sufficiently large integers can be represented as a nonnegative integer combination of $a_1,\dots,a_k$. Then it is important to determine the largest positive integer that is not representable as a nonnegative integer combination of given positive integers that are coprime. Such a problem is known as the {\it Frobenius Problem} and this largest positive integer is denoted by $g(a_1,\dots,a_k)$ and called the {\it Frobenius number} (see \cite{ra05} for general references)\footnote{Some other symbols have also been used by different backgrounds and authors. The symbols used in this paper are mainly based on literature such as \cite{ra05,se77,tr08}.}. This problem has been also known as the Coin Exchange Problem, Postage Stamp Problem or Chicken McNugget Problem, so has a long history.   
Together with the Frobenius numbers, the number of positive integers with no nonnegative integer representation by $a_1,\dots,a_k$ has also been studied for a long time. This number is sometimes called the {\it Sylvester number} (or {\it genus} in numerical semigroup) and denoted by $n(a_1,\dots,a_k)$.   

According to Sylvester, for positive integers $a$ and $b$ with $\gcd(a,b)=1$,  
\begin{align*}
g(a,b)&=(a-1)(b-1)-1\quad{\rm \cite{sy1884}}\,,\\
n(a,b)&=\frac{1}{2}(a-1)(b-1)\quad{\rm \cite{sy1882}}\,. 
\end{align*}

There are many kinds of problems related to the Frobenius problem. The problems for the number of solutions (e.g., \cite{tr00}), and the sum of integer powers of values the gaps in numerical semigroups (e.g., \cite{bs93,fr07,fks}) are popular. In \cite{Moree14}  the various results within the cyclotomic polynomial and numerical semigroup communities are unified.  
One of other famous problems is about the so-called {\it Sylvester sums} 
$$
s(a_1,\dots,a_k):=\sum_{n\in{\rm NR}(a_1,\dots,a_k)}n 
$$ 
(see, e.g., \cite[\S 5.5]{ra05}, \cite{tu06} and references therein), where ${\rm NR}(a_1,\dots,a_k)$ denotes the set of positive integers without nonnegative integer representation by $a_1,\dots,a_k$. This is exactly the set of gaps in numerical semigroup.  
It is harder to obtain the Sylvester number than the Frobenius number, and even harder to obtain the Sylvester sum. Finally, long time after Sylvester, 
Brown and Shiue \cite{bs93} found the exact value for positive integers $a$ and $b$ with $\gcd(a,b)=1$,  
\begin{equation}
s(a,b)=\frac{1}{12}(a-1)(b-1)(2 a b-a-b-1)\,. 
\label{brown}
\end{equation} 
R\o dseth \cite{ro94} generalized Brown and Shiue's result by giving a closed form for $
s_\mu(a,b):=\sum_{n\in{\rm NR}(a,b)}n^\mu$,  
where $\mu$ is a positive integer.

When $k=2$, there exist beautiful closed forms for Frobenius numbers, Sylvester numbers and Sylvester sums, but 
when $k\ge 3$, exact determination of these numbers is extremely difficult.  
The Frobenius number cannot be given by closed formulas of a certain type (Curtis (1990) \cite{cu90}), the problem to determine $g(a_1,\dots,a_k)$ is NP-hard under Turing reduction (see, e.g., Ram\'irez Alfons\'in \cite{ra05}). 
Nevertheless, one convenient formula is found by Johnson \cite{jo60}. One analytic approach to the Frobenius number can be seen in \cite{bgk01,ko03}.

Though closed forms for general case are hopeless for $k\ge 3$, several formulae for Frobenius numbers, Sylvester numbers and Sylvester sums have been considered under special cases.  For example, one of the best expositions for the Frobenius number in three variables can be seen in \cite{tr17}. For general $k\ge 3$, the Frobenius number and the Sylvester number for some special cases are calculated, including arithmetic sequences and geometric-like sequences (e.g., \cite{br42,op08,ro56,se77}). 

In fact, by introducing the Ap\'ery set, it is possible to determine the functions $g(A)$, $n(A)$ and $s(A)$ for the set of positive integers $A:=\{a_1,a_2,\dots,a_k\}$ with $\gcd(a_1,a_2,\dots,a_k)=1$. 
For the set $A=\{a_1,a_2,\dots,a_k\}$ with $\gcd(A)=1$ and $a_1=\min(A)$ we denote by 
$$
{\rm Ape}(A)={\rm Ape}(A,a_1)=\{m_0,m_1,\dots,m_{a_1-1}\}\,, 
$$ 
the {\it Ap\'ery set} of $A$, where $m_i$ is the least positive integer that can be represented by a nonnegative integral linear combination of $a_2,\dots,a_k$, satisfying $m_i\equiv i\pmod{a_1}$ $(1\le i\le a_1-1)$.  
Note that $m_0$ is defined to be $0$. The element $0$ is often excluded because it does not affect the calculation.   

\begin{Lem}  
We have 
\begin{align*}
g(a_1,a_2,\dots,a_k)&=\left(\max_{1\le i\le a_1-1}m_i\right)-a_1\,,\quad{\rm \cite{bs62}}\\ 
n(a_1,a_2,\dots,a_k)&=\frac{1}{a_1}\sum_{i=1}^{a_1-1}m_i-\frac{a_1-1}{2}\,,\quad{\rm \cite{se77}}\\ 
s(a_1,a_2,\dots,a_k)&=\frac{1}{2 a_1}\sum_{i=1}^{a_1-1}m_i^2-\frac{1}{2}\sum_{i=1}^{a_1-1}m_i+\frac{a_1^2-1}{12}\,.\quad{\rm \cite{tr08}}
\end{align*}
\label{lem1} 
\end{Lem}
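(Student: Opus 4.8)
The plan is to derive all three identities from a single structural fact about the Ap\'ery set and then reduce everything to summing arithmetic progressions. First I would record the defining property of ${\rm Ape}(A)$: since the set of representable integers is closed under adding $a_1$, an integer $n$ with $n\equiv i\pmod{a_1}$ ($0\le i\le a_1-1$) is representable if and only if $n\ge m_i$. Indeed, $m_i$ is by definition the least nonnegative representable integer in its residue class, and adding multiples of $a_1$ preserves representability; conversely, every representable $n\equiv i$ is $\ge m_i$ by minimality. Consequently the set of \emph{non}-representable positive integers splits as the disjoint union
$$
{\rm NR}(a_1,a_2,\dots,a_k)=\bigsqcup_{i=1}^{a_1-1}\{\,i,\ i+a_1,\ i+2a_1,\ \dots,\ m_i-a_1\,\}\,,
$$
the residue class $i=0$ contributing nothing because $m_0=0$ makes every positive multiple of $a_1$ representable. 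Writing $t_i:=(m_i-i)/a_1$ for the number of terms in the $i$-th block, that block is an arithmetic progression with first term $i$, common difference $a_1$, and last term $m_i-a_1$, all of which are positive integers since $m_i\ge i\ge 1$.

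Next I would read off the three quantities block by block. The Frobenius number is the largest element of ${\rm NR}$, hence $g(a_1,\dots,a_k)=\max_{1\le i\le a_1-1}(m_i-a_1)=\bigl(\max_{1\le i\le a_1-1}m_i\bigr)-a_1$, giving the first identity. Summing the block cardinalities gives $n(a_1,\dots,a_k)=\sum_{i=1}^{a_1-1}t_i=\frac{1}{a_1}\sum_{i=1}^{a_1-1}m_i-\frac{1}{a_1}\sum_{i=1}^{a_1-1}i$, and $\sum_{i=1}^{a_1-1}i=\tfrac12(a_1-1)a_1$ yields the second identity. For the third, the sum of the $i$-th block equals $t_i\cdot\frac{i+(m_i-a_1)}{2}=\frac{(m_i-i)(m_i+i-a_1)}{2a_1}=\frac{m_i^2}{2a_1}-\frac{m_i}{2}-\frac{i^2}{2a_1}+\frac{i}{2}$; summing over $i$ and using $\sum_{i=1}^{a_1-1}i^2=\tfrac16(a_1-1)a_1(2a_1-1)$ collapses the $i$-dependent part to $-\frac{(a_1-1)(2a_1-1)}{12}+\frac{(a_1-1)a_1}{4}=\frac{a_1^2-1}{12}$, which gives $s(a_1,\dots,a_k)=\frac{1}{2a_1}\sum_{i=1}^{a_1-1}m_i^2-\frac12\sum_{i=1}^{a_1-1}m_i+\frac{a_1^2-1}{12}$.

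There is no serious obstacle here: the mathematical content lies entirely in the first paragraph's characterization of ${\rm NR}$ in terms of ${\rm Ape}(A)$, and everything after it is bookkeeping with the elementary sums $\sum i$ and $\sum i^2$. The only points that require a little care are the separate treatment of the residue class $0$, the observation that $m_i\ge i$ guarantees that each listed element is a positive integer (so no spurious or missing terms occur), and the final arithmetic simplification producing the constant $\frac{a_1^2-1}{12}$.
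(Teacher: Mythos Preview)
Your argument is correct: the characterization of ${\rm NR}(a_1,\dots,a_k)$ as the disjoint union of the finite arithmetic progressions $\{i,i+a_1,\dots,m_i-a_1\}$ for $1\le i\le a_1-1$ is exactly the structural fact needed, and your block-by-block computations for $g$, $n$, and $s$ are all valid, including the final simplification to $\tfrac{a_1^2-1}{12}$.

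There is nothing to compare on the paper's side: the paper does not prove Lemma~\ref{lem1} at all but quotes the three identities from the literature (Brauer--Shockley for $g$, Selmer for $n$, Tripathi for $s$), merely noting that the third formula had a typo in the original source. Your proof is the standard derivation that underlies those references, so you have in effect supplied what the paper omits.
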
 

\noindent 
Third formula appeared with a typo in \cite{tr08}, and it has been corrected in \cite{pu18,tr17b}.
Recently, we study the weighted sums and weighted power sums. When $k=2$, a general formula can be expressed in terms of the Apostol-Bernoulli numbers \cite{KZ0}. For general $k$, we can have a formula by using Eulerian numbers \cite{KZ}. In \cite{ko22}, a more general formula including $n(A)$ and $s(A)$ is given by using Bernoulli numbers.     
\bigskip 

As a more general case than the arithmetic sequence, the sequence $a_1=a$, $a_2=a+d$, $\dots$, $a_k=a+(k-1)d$, $a_{k+1}=a+K d$ with $K>k$ has been studied (\cite{se77}). This is one typical case of the so-called {\it almost arithmetic sequence}. In this case, there is an additional term after some gap. As special cases, the Frobenius numbers and the Sylvester number are give for the sequences $a,a+1,a+2,a+4$; $a,a+1,a+2,a+5$; $a,a+1,a+2,a+6$ and so on (\cite{dm64,se77}). Namely, a gap appears in the last. in \cite{si74}, the Frobenius numbers of various cases are expressed in which the $a_i$'s lie in an arithmetic progression, but the results are incomplete. In \cite{ro79}, when the $a_i$ form an almost arithmetic sequence, by considering the Ap\'ery set, algorithms to determine the Sylvester number and sum are given. But, in this paper the computations rely on ${\rm Ape}(A,a+(K+1)d)$ instead of ${\rm Ape}(A,a)$. In \cite{PS90}, the authors gave an alternative description of the Ap\'ery set of the first element in the arithmetic sequence. Their aim was different: they wanted to describe the minimal presentation of the semigroup. 
The approaches in \cite{ro79,PS90} may apply to any almost arithmetic sequence, but they both have an extra burden: the require the pre-computation of a couple of constants depending on the sequence.  Some other applications for the case of almost arithmetic sequences can be found in Section 4 of \cite{RR09}.     

In this paper, we do not study only the Frobenius numbers, but also the Sylvester number and the Sylvester sum where $a_1=a$, $a_2=a+(K+1)d$, $a_3=a+(K+2)d$, $\dots$, $a_{k-K+1}=a+k d$ with $d>0$, $\gcd(a,d)=1$ and $k\ge K+1\ge 2$. Namely, a gap appears in the first. As special cases, we yield these numbers and sums explicitly for the sequences $a,a+2,a+3,a+4$; $a,a+3,a+4,a+5$; $a,a+4,a+5,a+6$; $a,a+5,a+6,a+7$ and so on.

\section{Main result}

For positive integers $a$ and $d$ with $a\ge 2$ and $\gcd(a,d)=1$, consider the sequence $a,a+(K+1)d,a+(K+2)d,\dots,a+k d$. 
Note that the last $(k-K)$ terms form an arithmetic sequence, and there is a gap in the first part. Determine nonnegative integers $q$ and $r$ as  
\begin{equation}
a+K=q k+r,\quad 0\le r<k\,. 
\label{eq:qr}
\end{equation} 
Namely, $q=\fl{(a+K)/k}$ and $r=a+K-\fl{(a+K)/k}k$. 

In this section, we assume that $1\le K\le(k-1)/2$, that is, the gaps between the first term and the second term are not so big. The case when $K>(k-1)/2$ is discussed in the next section. In addition to this condition,  
we shall discuss the two cases separately: $r>K$ or $r\le K$. 

\noindent 
{\bf Case 1:}  
When $r>K$, all the elements of Ap\'ery set excluding $0\pmod a$ can be determined in the following table. 
{\tiny 
\begin{align*}
&&\,&&\,&&\,&a+(K+1)d&\,&\dots&\,&a+k d\\ 
&2 a+(k+1)d&\,&\dots&\,&2 a+(k+K)d&\,&2 a+(k+K+1)d&\,&\dots&\,&2 a+2 k d\\ 
&3 a+(2 k+1)d&\,&\dots&\,&3 a+(2 k+K)d&\,&3 a+(2 k+K+1)d&\,&\dots&\,&3 a+3 k d\\ 
&\dots&\,&\dots&\,&\dots&\,&\dots&\,&\dots&\,&\dots\\ 
&q a+((q-1)k+1)d&\,&\dots&\,&q a+((q-1)k+K)d&\,&q a+((q-1)k+K+1)d&\,&\dots&\,&q a+q k d\\ 
&(q+1)a+(q k+1)d&\,&\dots&\,&\dots&\,&\!\!\!\!\!\!\!\!(q+1)a+(q k+r)d&\,&&\,&\\
&&\,&&\,&\!\!\!\!\!\!\!\!\widehat{(q+1)a+a d}&\,&&\,&\dots&\,&
\end{align*}
} 
The last line consists of $r-1$ terms excluding $(q+1)a+a d=(q+1)a+(q k+r-K)d$, because it is equal to $0\pmod a$. Note that by $\gcd(a,d)=1$, the set of all the elements in this table forms a complete residue system modulo $a$ excluding $0\pmod a$: 
\begin{multline*}
\{(K+1)d,(K+2)d,\dots,(a-1)d,(a+1)d,\dots,(a+K)d\}\\
=\{1,2,\dots,a-1\}\pmod a\,.
\end{multline*} 
Since $K\le(k-1)/2$, $2 a+(k+1)d$ can be represented by using the elements of the last $(K-k)$ terms of the given sequence, which appear in the first line. So, all terms in this table after $2 a+(k+1)d$ can be also represented. In addition, none of the elements in the table can be represented by subtracting $a$. Therefore, the elements in this table form the Ap\'ery set except $0\pmod a$.    
Hence, 
\begin{align*}  
\sum_{i=1}^{a-1}m_i&=\bigl(1\cdot(k-K)+(2+3+\cdots+q)k+(q+1)(r-1)\bigr)a\\
&\quad +\bigl((K+1)+(K+2)+\cdots+(a-1)+(a+1)+\cdots+(a+K)\bigr)d\\
&=\left(\frac{q(q+1)}{2}k-K+(q+1)(r-1)\right)a+\left(\frac{a-1}{2}+K\right)a d
\end{align*}
and 
\begin{align*}  
&\sum_{i=1}^{a-1}m_i^2\\
&=\bigl(1^2\cdot(k-K)+(2^2+3^2+\cdots+q^2)k+(q+1)^2(r-1)\bigr)a^2\\
&\quad +\bigl((K+1)^2+(K+2)^2+\cdots+(a-1)^2+(a+1)^2+\cdots+(a+K)^2\bigr)d^2\\
&\quad +2 a d\biggl(1\bigl((K+1)+\cdots+k\bigr))+2\bigl((k+1)+\cdots+(2 k)\bigr))\\
&\qquad +3\bigl((2 k+1)+\cdots+(3 k)\bigr))+\cdots+q\bigl(((q-1)k+1)+\cdots+(q k)\bigr))\\
&\qquad +(q+1)\bigl((q k+1)+\cdots+(q k+r)-a\bigr))
\biggr)\\
&=\biggl(\frac{q(q+1)(2 q+1)}{6}k-K+(q+1)^2(r-1)\biggr)a^2\\
&\quad +\biggl(\frac{(a+K)(a+K+1)(2 a+2 K+1)}{6}-\frac{K(K+1)(2 K+1)}{6}-a^2\biggr)d^2\\
&\quad + 2 a d\biggl(\left(\frac{k(k+1)}{2}-\frac{K(K+1)}{2}\right)+2\left(\frac{2 k(2 k+1)}{2}-\frac{k(k+1)}{2}\right)\\
&\qquad +3\left(\frac{3 k(3 k+1)}{2}-\frac{2 k(2 k+1)}{2}\right)  
 +\cdots\\
&\qquad +q\left(\frac{q k(q k+1)}{2}-\frac{(q-1)k((q-1)k+1)}{2}\right)\\
&\qquad +(q+1)\left(\frac{(q k+r)(q k+r+1)}{2}-\frac{q k(q k+1)}{2}-a\right)\biggr)\\ 
&=\biggl(\frac{q(q+1)(2 q+1)}{6}k-K+(q+1)^2(r-1)\biggr)a^2\\
&\quad +\frac{\bigl(6 K(a+K+1)+(a-1)(2 a-1)\bigr)}{6}a d^2\\
&\quad +2 a d\biggl(-\frac{K(K+1)}{2}-\frac{q k(q+1)(2 q k+k+3)}{12}\\
&\qquad +\frac{(q+1)(a+K)(a+K+1)}{2}-a(q+1)\biggr)\,. 
\end{align*}
Therefore, by the third formula in Lemma \ref{lem1} together with $q=(a+K-r)/k$, we have 
\begin{align*}
&s(a,a+(K+1)d,\dots,a+k d)\\
&=\frac{1}{2 a}\sum_{i=1}^{a-1}m_i^2-\frac{1}{2}\sum_{i=1}^{a-1}m_i+\frac{a^2-1}{12}\\
&=\frac{1}{12 k^2}\biggl(  
2 a^4+6(K-1)a^3+\bigl(6 K(K-2)-k(k+6)-6 r^2+6(k+2)r\bigr)a^2\\ 
&\quad +\bigl(2 K^2(K-3)-2K k(k+3)\\
&\qquad +4 r^3-6(K+k+1)r^2+(6 K(k+2)+2 k(k+3))r\bigr)a\\ 
&\quad -k^2+\bigl(2 a^2+3(2 K-1)a+6 K^2+6 K+1\bigr)k^2 d^2\\ 
&\quad +\bigl(
4 a^3+3(4 K-3)a^2\\
&\qquad +\bigl(6 K(2 K-1)-k(k+6)-6 r^2+6(k+2)r\bigr)a\\ 
&\qquad +4 K^3-3 K^2(k-1)-K k(k+3)\\
&\qquad +2 r^3-3(2 K+k+1)r^2+k(6 K+k+3)r
\bigr)k d 
\biggr)\,. 
\end{align*}

\noindent 
{\bf Case 2:} 
Let $0\le r\le K$. Since the term $q a+(q k+r-K)d=q a+a d\equiv 0\pmod a$ is excluded, we have 
\begin{align*}  
\sum_{i=1}^{a-1}m_i&=\bigl(1\cdot(k-K)+(2+3+\cdots+q)k-q+(q+1)r\bigr)a\\ 
&\quad +\left(\frac{(a-1)a}{2}+a K\right)d\\
&=\left(\frac{q(q+1)}{2}k-K-q+(q+1)r\right)a+\left(\frac{a-1}{2}+K\right)a d
\end{align*}
and 
\begin{align*}  
&\sum_{i=1}^{a-1}m_i^2\\
&=\bigl(1^2\cdot(k-K)+(2^2+3^2+\cdots+q^2)k-q^2+(q+1)^2 r\bigr)a^2\\
&\quad +\bigl((K+1)^2+(K+2)^2+\cdots+(a-1)^2+(a+1)^2+\cdots+(a+K)^2\bigr)d^2\\
&\quad +2 a d\biggl(1\bigl((K+1)+\cdots+k\bigr))+2\bigl((k+1)+\cdots+(2 k)\bigr))\\
&\qquad +3\bigl((2 k+1)+\cdots+(3 k)\bigr))+\cdots+q\bigl(((q-1)k+1)+\cdots+(q k)\bigr))-q a\biggr)\\
&\qquad +(q+1)\bigl((q k+1)+\cdots+(q k+r)\bigr))
\biggr)\\
&=\biggl(\frac{q(q+1)(2 q+1)}{6}k-K-q^2+(q+1)^2 r\biggr)a^2\\
&\quad +\frac{\bigl(6 K(a+K+1)+(a-1)(2 a-1)\bigr)}{6}a d^2\\
&\quad +2 a d\biggl(-\frac{K(K+1)}{2}-\frac{q k(q+1)(2 q k+k+3)}{12}\\
&\qquad +\frac{(q+1)(a+K)(a+K+1)}{2}-a q\biggr)\,. 
\end{align*}
Therefore, by the third formula in Lemma \ref{lem1} together with $q=(a+K-r)/k$, we have 
\begin{align*}
&s(a,a+(K+1)d,\dots,a+k d)\\
&=\frac{1}{12 k^2}\biggl( 
2 a^4+6(K-1)a^3\\
&\quad +\bigl(6 K(K-2)-k(k-6)-6 r^2+6(k+2)r\bigr)a^2\\ 
&\quad +\bigl(2 K^2(K-3)-2 K k(k-3)\\
&\qquad +4 r^3-6(K+k+1)r^2+(6 K(k+2)-2 k(k-3))r\bigr)a\\ 
&\quad -k^2+\bigl(2 a^2+3(2 K-1)a+6 K^2+6 K+1\bigr)k^2 d^2\\ 
&\quad +\bigl( 
4 a^3+3(4 K-3)a^2\\
&\qquad +\bigl(6 K(2 K-1)-k(k-6)-6 r^2+6(k+2)r\bigr)a\\
&\qquad +4 K^3-3 K^2(k-1)-K k(k+3)\\
&\qquad +2 r^3-3(2 K+k+1)r^2+k(6 K+k+3)r
\bigr)k d 
\biggr)\,. 
\end{align*}

\begin{theorem}  
Let $a,d,K,k$ be positive integers with $a\ge 2$, $\gcd(a,d)=1$ and $K\le(k-1)/2$. Let $r=a+K-\fl{(a+K)/k}k$. If $r>K$, then  
\begin{align*}
&s(a,a+(K+1)d,\dots,a+k d)\\
&=\frac{1}{12 k^2}\biggl(  
2 a^4+6(K-1)a^3+\bigl(6 K(K-2)-k(k+6)-6 r^2+6(k+2)r\bigr)a^2\\ 
&\quad +\bigl(2 K^2(K-3)-2K k(k+3)\\
&\qquad +4 r^3-6(K+k+1)r^2+(6 K(k+2)+2 k(k+3))r\bigr)a\\ 
&\quad -k^2+\bigl(2 a^2+3(2 K-1)a+6 K^2+6 K+1\bigr)k^2 d^2\\ 
&\quad +\bigl(
4 a^3+3(4 K-3)a^2\\
&\qquad +\bigl(6 K(2 K-1)-k(k+6)-6 r^2+6(k+2)r\bigr)a\\ 
&\qquad +4 K^3-3 K^2(k-1)-K k(k+3)\\
&\qquad +2 r^3-3(2 K+k+1)r^2+k(6 K+k+3)r
\bigr)k d 
\biggr)\,.  
\end{align*} 
If $0\le r\le K$, then 
\begin{align*}
&s(a,a+(K+1)d,\dots,a+k d)\\
&=\frac{1}{12 k^2}\biggl( 
2 a^4+6(K-1)a^3\\
&\quad +\bigl(6 K(K-2)-k(k-6)-6 r^2+6(k+2)r\bigr)a^2\\ 
&\quad +\bigl(2 K^2(K-3)-2 K k(k-3)\\
&\qquad +4 r^3-6(K+k+1)r^2+(6 K(k+2)-2 k(k-3))r\bigr)a\\ 
&\quad -k^2+\bigl(2 a^2+3(2 K-1)a+6 K^2+6 K+1\bigr)k^2 d^2\\ 
&\quad +\bigl( 
4 a^3+3(4 K-3)a^2\\
&\qquad +\bigl(6 K(2 K-1)-k(k-6)-6 r^2+6(k+2)r\bigr)a\\
&\qquad +4 K^3-3 K^2(k-1)-K k(k+3)\\
&\qquad +2 r^3-3(2 K+k+1)r^2+k(6 K+k+3)r
\bigr)k d 
\biggr)\,. 
\end{align*} 
\label{th:aKk}
\end{theorem}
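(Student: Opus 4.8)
The strategy is the one implicit in the computation preceding the statement: first pin down the Ap\'ery set ${\rm Ape}(A,a)$ of $A=\{a,\,a+(K+1)d,\dots,a+kd\}$ completely and in closed form, then substitute the resulting values of $\sum_{i=1}^{a-1}m_i$ and $\sum_{i=1}^{a-1}m_i^2$ into the third identity of Lemma~\ref{lem1}. Every element of the numerical semigroup $S=\langle a,a+(K+1)d,\dots,a+kd\rangle$ can be written as $(\alpha+\beta)a+t d$, where $\alpha\ge 0$ counts the copies of $a$, $\beta=\sum_j\beta_j\ge 0$ counts the copies of the generators $a+jd$, and $t=\sum_j j\beta_j$ ranges over every integer in $[(K+1)\beta,\,k\beta]$ because the indices $j$ run through the consecutive block $K+1,\dots,k$. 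Since $\gcd(a,d)=1$, the residue of such an element mod $a$ is governed by $t\bmod a$, and the $a-1$ numbers $\{K+1,\dots,a-1\}\cup\{a+1,\dots,a+K\}$ form a transversal of the nonzero classes. For a fixed nonzero class, minimizing $(\alpha+\beta)a+t d$ forces $\alpha=0$, forces $t$ to be the unique transversal element $\tilde t$ of that class (a short estimate rules out replacing $\tilde t$ by $\tilde t+a$), and forces $\beta$ to be the least integer with $(K+1)\beta\le\tilde t\le k\beta$. The hypothesis enters exactly here: writing $\tilde t\in((\ell-1)k,\ell k]$, the condition that this least $\beta$ equal $\lceil\tilde t/k\rceil=\ell$ for every transversal element is precisely $\ell(K+1)\le(\ell-1)k+1$ for all $\ell\ge 2$, which is equivalent to $K\le(k-1)/2$; without it the block structure already fails at $\tilde t=k+1$, which is why the regime $K>(k-1)/2$ is deferred to the next section. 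Thus $m_i=\lceil\tilde t/k\rceil\,a+\tilde t\,d$, which is exactly the displayed table: the transversal splits into a first block $\{K+1,\dots,k\}$ of length $k-K$ with multiplier $1$, full blocks of length $k$ with multipliers $2,3,\dots,q$, and a final block of length $r$ coming from $a+K=qk+r$, while the value $t=a$ (class $0$) must be discarded — and $t=a=qk+r-K$ lies in the last block precisely when $r>K$ (Case~1) and in the $q$-th block when $r\le K$ (Case~2). This identification of the Ap\'ery set, together with pinning down the multipliers and the role of $K\le(k-1)/2$, is the step I expect to be the main obstacle; everything afterwards is determinate algebra.

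Granting the table, I would compute the two power sums blockwise. Writing each Ap\'ery element as $j a+t d$ and expanding $(j a+t d)^2=j^2a^2+2jt\,ad+t^2d^2$, the three parts are summed separately: the $a^2$-part is $a^2$ times a weighted sum of squares of multipliers, handled by $\sum_{\ell=1}^{q}\ell^2 k$ together with the first-block correction $k-K$ and the short last block; the $d^2$-part is $d^2\bigl(\sum_{t=K+1}^{a+K}t^2-a^2\bigr)$, evaluated by the standard formula for $\sum t^2$; and the cross term is $2ad\sum_{\ell}\ell\cdot(\text{sum of }t\text{ over the }\ell\text{-th block})$, which after $\sum_{t=1}^{m}t=\tfrac{m(m+1)}{2}$ collapses into the telescoping expression written just before the theorem. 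The only bookkeeping difference between Case~1 and Case~2 is whether the excluded term $qa+ad$ costs the last block one element of multiplier $q+1$ or costs the $q$-th block one element of multiplier $q$; this turns $-(q+1)$ into $-q^2$ inside $\sum_{i=1}^{a-1}m_i^2$ and $-1$ into $-q$ inside $\sum_{i=1}^{a-1}m_i$, and nothing else changes.

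The last step is to feed these closed forms into
\[
s(a,a+(K+1)d,\dots,a+kd)=\frac{1}{2a}\sum_{i=1}^{a-1}m_i^2-\frac12\sum_{i=1}^{a-1}m_i+\frac{a^2-1}{12}
\]
from Lemma~\ref{lem1}, replace $q$ by $(a+K-r)/k$ everywhere, and simplify over the common denominator $12k^2$ that this substitution forces. Division by $2a$ is legitimate because every term of $\sum_{i=1}^{a-1}m_i^2$ carries a factor $a$ (or $a^2$), and collecting the coefficients of $d^2$, $d^1$, $d^0$ and of $a^4,a^3,a^2,a^1,a^0$ yields precisely the two polynomials in the statement; the $k(k\pm 6)$ and $Kk(k\pm 3)$ terms are exactly where the $-1$ versus $-q^2$ discrepancy of the preceding paragraph lands, which accounts for the difference between the $r>K$ and $r\le K$ formulas. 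In short, once the Ap\'ery-set table is justified — the genuinely delicate point, hinging on $K\le(k-1)/2$ — the rest is a long but mechanical polynomial identity.
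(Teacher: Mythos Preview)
Your proposal is correct and follows essentially the same route as the paper: identify the Ap\'ery set via the block table (with the excluded residue $t=a$ landing in the last block when $r>K$ and in the $q$-th block when $r\le K$), compute $\sum m_i$ and $\sum m_i^2$ blockwise, and plug into the third identity of Lemma~\ref{lem1}. Your minimization argument for the Ap\'ery elements is in fact more explicit than the paper's brief verification, and the small slip in describing the Case~1/Case~2 discrepancy (the relevant change is $-(q+1)^2$ versus $-q^2$ in the $a^2$-coefficient of $\sum m_i^2$, and $-(q+1)$ versus $-q$ in the $a$-coefficient of $\sum m_i$) does not affect the argument.
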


By applying the first formula in Lemma \ref{lem1}, we can obtain the Frobenius number of the almost arithmetic sequence with initial gaps. Here, integers $a,d,K,k,r$ are determined as in Theorem \ref{th:aKk}.  

\begin{theorem}  
Under the same conditions as in Theorem \ref{th:aKk}, we have 
\begin{align*}
g(a,a+(K+1)d,\dots,a+k d)&=\begin{cases} 
\frac{a(a+K-r)}{k}+(a+K)d&\text{if $r>0$}\\ 
\frac{a(a+K-k)}{k}+(a+K)d&\text{if $r=0$}
\end{cases}\\
&=\left(\cl{\frac{a+K}{k}}-1\right)a+(a+K)d\,. 
\end{align*} 
\label{cor:aKk-fn} 
\end{theorem}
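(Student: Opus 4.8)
The plan is to apply the first formula of Lemma \ref{lem1}, namely $g(A) = \left(\max_{1\le i\le a-1} m_i\right) - a$, using the explicit description of the Ap\'ery set already worked out in Cases 1 and 2 above. So the only real task is to identify which entry of the tables displayed in those two cases is the largest, and then subtract $a$.

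First I would look at the structure of the coefficients of $a$ and $d$ across the table. Reading down the rows, the coefficient of $a$ increases by $1$ at each new row (from $1$ in the first row up to $q$ in the $q$-th row, then $q+1$ in the last row), while within a fixed row the coefficient of $d$ increases as we move right. Since both $a$ and $d$ are positive and the rows are ``aligned'' so that the coefficient of $d$ in the last column of row $j$ is $jkd$ and in the first column of row $j+1$ is $(jk+1)d$, the entries are in fact strictly increasing as we traverse the table in reading order. Hence the maximum is the last entry of the final (partial) row. In Case 1 ($r>K$) that last row runs up to $(q+1)a + (qk+r)d$, and since $r\ge 1$ this last term is present, giving $\max m_i = (q+1)a + (qk+r)d$. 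Recalling $qk + r = a+K$ and $q = (a+K-r)/k$, this equals $\frac{a(a+K-r)}{k} + a + (a+K)d$, so subtracting $a$ yields $g = \frac{a(a+K-r)}{k} + (a+K)d$, which is the $r>0$ branch (note $r>K\ge 1$ forces $r>0$).

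Next I would handle the remaining subcase of $0\le r\le K$, which splits further according to whether $r>0$ or $r=0$. When $0<r\le K$, the excluded entry $\equiv 0\pmod a$ is $qa + (qk+r-K)d = qa + ad$, which sits in the $q$-th row (not the last partial row), so the last partial row still ends at $(q+1)a + (qk+r)d$ and the same computation as in Case 1 gives $g = \frac{a(a+K-r)}{k} + (a+K)d$. When $r=0$ there is no partial row at all: the table ends with the $q$-th row, whose largest surviving entry is $qa + qkd$ — but one must check that the entry $qa + ad$ removed for being $\equiv 0\pmod a$ is not this last one; since $ad = qkd$ would force $a = qk$, i.e. $r=0$ with $K=0$, contradicting $K\ge 1$, the removed entry is interior and the max is $qa + qkd = qa + (a+K-0)\cdot\frac{?}{}$ — more precisely, with $qk = a+K$ (as $r=0$), we get $qa+(a+K)d$ and then must subtract $a$ \emph{and} account for the fact that $q = (a+K)/k$ overshoots: rewriting, $\max m_i = qa + (a+K)d$ with $q=(a+K)/k$, so $g = qa - a + (a+K)d = \frac{a(a+K-k)}{k} + (a+K)d$, matching the $r=0$ branch. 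Finally I would unify the two branches: $r>0$ gives $q = \lceil (a+K)/k\rceil$ while $r=0$ gives $q-1 = \lceil (a+K)/k\rceil$ since $k\mid (a+K)$; in both cases $\max m_i - a = \left(\lceil (a+K)/k\rceil - 1\right)a + (a+K)d$.

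The one genuine point requiring care — the ``main obstacle'', though it is more a bookkeeping subtlety than a difficulty — is verifying that the maximum of the Ap\'ery set really is attained at the bottom-right rather than, say, at the end of the $q$-th full row when the partial row is short or when the deleted entry lies in the partial row. This reduces to the elementary inequality $(q+1)a + (qk+r)d > qa + qkd$ for $r\ge 1$, which holds since $a + rd > 0$, together with the observation (used above) that for $1\le K$ and $r\le K$ the deleted multiple-of-$a$ entry $qa+ad$ never coincides with the terminal entry of the table. Once these are checked, the theorem follows immediately by subtracting $a$ and performing the trivial algebraic simplification $qk+r = a+K$.
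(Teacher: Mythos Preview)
Your approach is essentially the paper's: read off the maximal Ap\'ery element from the tables of Cases~1 and~2 and subtract $a$. You are in fact more explicit than the paper about \emph{why} the bottom-right entry is the maximum and why the excluded $0\pmod a$ entry never coincides with it; the paper simply writes down $(q+1)a+(qk+r)d$ (resp.\ $qa+qkd$) without that discussion.

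One small slip: in your final unification paragraph you write ``$r>0$ gives $q=\lceil(a+K)/k\rceil$'' and ``$r=0$ gives $q-1=\lceil(a+K)/k\rceil$''. Both are off by one: for $r>0$ one has $\lceil(a+K)/k\rceil=q+1$, and for $r=0$ one has $\lceil(a+K)/k\rceil=q$. In each case the coefficient of $a$ in $g$ (namely $q$ when $r>0$ and $q-1$ when $r=0$) equals $\lceil(a+K)/k\rceil-1$, so your stated final formula is correct---only the intermediate sentence needs fixing.
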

\begin{proof}
If $r>0$, $(a+K)/k$ is not an integer.  Then by $q=(a+K-r)/k$, 
\begin{align*} 
g(a,a+(K+1)d,\dots,a+k d)&=(q+1)a+(q k+r)d-a\\
&=\frac{a(a+K-r)}{k}+(a+K)d\\
&=\fl{\frac{a+K}{k}}a+(a+K)d\\
&=\left(\cl{\frac{a+K}{k}}-1\right)a+(a+K)d\,. 
\end{align*}
If $r=0$, $(a+K)/k=q$ is an integer.  Then 
\begin{align*} 
g(a,a+(K+1)d,\dots,a+k d)&=q a+(q k+r)d-a\\
&=\frac{a(a+K-k)}{k}+(a+K)d\,. 
\end{align*}   
\end{proof}

By applying the second formula in Lemma \ref{lem1}, we have the Sylvester number of the almost arithmetic sequence with initial gaps.  

\begin{theorem}   
Under the same conditions as in Theorem \ref{th:aKk}, if $r>K$, then  
\begin{align*}
&n(a,a+(K+1)d,\dots,a+k d)\\
&=\dfrac{a^2+2(K-1)a+(a+2K-1)k d+K(K-k-2)-k-r(r-k-2)}{2 k}
\end{align*} 
and if $0\le r\le K$, then 
\begin{align*}
&n(a,a+(K+1)d,\dots,a+k d)\\
&=\dfrac{a^2+2(K-1)a+(a+2K-1)k d+K(K-k-2)+k-r(r-k-2)}{2 k}\,. 
\end{align*} 
\label{cor:aKk-sn} 
\end{theorem}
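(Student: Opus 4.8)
The plan is to invoke the second formula in Lemma~\ref{lem1}, namely $n(a,a+(K+1)d,\dots,a+kd)=\frac{1}{a}\sum_{i=1}^{a-1}m_i-\frac{a-1}{2}$, and to substitute the closed form for $\sum_{i=1}^{a-1}m_i$ that was already computed in the course of deriving Theorem~\ref{th:aKk}. Concretely, in Case~1 (when $r>K$) we established
\[
\sum_{i=1}^{a-1}m_i=\left(\frac{q(q+1)}{2}k-K+(q+1)(r-1)\right)a+\left(\frac{a-1}{2}+K\right)a d\,,
\]
and in Case~2 (when $0\le r\le K$) the same sum with $-K+(q+1)(r-1)$ replaced by $-K-q+(q+1)r$. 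So the first step is simply to divide each of these by $a$ and subtract $\frac{a-1}{2}$, which already cancels the $\frac{a-1}{2}d$-free part cleanly and leaves
\[
n=\frac{q(q+1)}{2}k-K+(q+1)(r-1)+\Bigl(\frac{a-1}{2}+K\Bigr)d-\frac{a-1}{2}
\]
in Case~1 (and the analogous expression in Case~2).

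The second step is to eliminate $q$ in favour of $a,K,k,r$ using $q=(a+K-r)/k$, exactly as was done for the Sylvester sum. Writing $q(q+1)=\frac{(a+K-r)(a+K-r+k)}{k^2}$, the term $\frac{q(q+1)}{2}k$ becomes $\frac{(a+K-r)(a+K-r+k)}{2k}$, and $(q+1)(r-1)=\frac{(a+K-r+k)(r-1)}{k}$. Putting everything over the common denominator $2k$ and collecting the $a^2$, $a$, constant, and $d$-linear contributions should reproduce the stated numerator $a^2+2(K-1)a+(a+2K-1)kd+K(K-k-2)-k-r(r-k-2)$; the only genuine work is verifying that the cross terms in $(a+K-r)(a+K-r+k)$ combine with $2k\bigl[-K+(r-1)\bigr]$ and $2k\cdot\frac{(a+K-r)(r-1)}{k}$ and the $d$-free $-k(a-1)+k(a-1)$ piece to give precisely that polynomial. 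For Case~2 the $-q$ extra term contributes $-2k\cdot\frac{a+K-r}{k}=-2(a+K-r)$ to the numerator, which is what flips the sign of the $\mp k$ term (and the $k(k\mp6)$-type discrepancies already visible between the two Sylvester-sum formulas propagate here in the simpler linear form); one checks $-2(a+K-r)$ shifts the constant from $-k$ to $+k$ after the $r$- and $a$-dependent parts are absorbed.

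Alternatively, and perhaps more safely, one can bypass $\sum m_i$ entirely and derive $n$ from the Sylvester-sum computation's bookkeeping directly: $n$ counts the number of entries in the Ap\'ery table (excluding $0\bmod a$), which from the table is $(k-K)+\bigl(2+3+\cdots+q\bigr)\cdot\frac{?}{}$—no, more precisely each row $j$ contributes $k$ entries for $2\le j\le q$ and the top row contributes $k-K$, with the last row contributing $r-1$ in Case~1 or $r$ (minus the one excluded element already accounted) in Case~2; but $n(A)$ is not that count, it is $\frac1a\sum m_i-\frac{a-1}{2}$, so this route still needs $\sum m_i$ and offers no shortcut. Hence I would stick with the first approach. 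The main obstacle is purely the algebraic reconciliation in step two: making sure the coefficient of $k$ in the numerator comes out as the bare $\mp k$ claimed, with no leftover $kr$ or $kK$ terms, which requires careful expansion of $(a+K-r)(a+K-r+k)+2k(r-1-K)+2(a+K-r)(r-1)$ and checking the $k$-linear part collapses to $(a+2K-1)kd$ (from the $d$-term) plus $\mp k$ only. Given that the much heavier Sylvester-sum identity was already verified, this reduction is routine, and no new idea beyond substitution and collection of like terms is needed.
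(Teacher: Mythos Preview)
Your proposal is correct and follows exactly the paper's approach: apply the second formula of Lemma~\ref{lem1} to the expressions for $\sum_{i=1}^{a-1}m_i$ already obtained in Cases~1 and~2, then eliminate $q$ via $q=(a+K-r)/k$ and simplify. The paper's own proof is a one-line application of this recipe (it writes out the intermediate expression $\frac{q(q+1)}{2}k-K+(q+1)(r-1)+\bigl(\frac{a-1}{2}+K\bigr)d-\frac{a-1}{2}$ and then the final answer, stating ``other cases are proved similarly and omitted''), so your plan is the same but with the algebra spelled out more fully; your ``alternatively'' digression is unnecessary and can be dropped.
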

\begin{proof}  
If $r>K$, by the second formula in Lemma \ref{lem1} together with $q=(a+K-r)/k$, we have 
\begin{align*}  
&n(a,a+(K+1)d,\dots,a+k d)\\
&=\frac{q(q+1)}{2}k-K+(q+1)(r-1)+\left(\frac{a-1}{2}+K\right)d-\frac{a-1}{2}\\
&=\frac{a^2+2(K-1)a+(a+2K-1)k d+K(K-k-2)-k-r(r-k-2)}{2 k}\,. 
\end{align*}
Other cases are proved similarly and omitted.  
\end{proof}

\subsection{Examples} 

Consider the sequence $11, 23, 27, 31$.  Then, $a=11$, $d=4$, $K=2$, $k=5$, $q=2$ and $r=3$. By Theorem \ref{th:aKk}, we have $s(11,23,27,31)=1149$. Indeed, 
\begin{align*}  
&s(11,23,27,31)\\
&=1 + 2 + 3 + 4 + 5 + 6 + 7 + 8 + 9 + 10 + 12 + 13 + 14 + 15 + 16 + 17\\
&\quad + 18 + 19 + 20 + 21 + 24 + 25 + 26 + 28 + 29 + 30 + 32 + 35 + 36\\
&\quad + 37 + 39 + 40 + 41 + 43 + 47 + 48 + 51 + 52 + 59 + 63 + 70 + 74\\
&=1149\,. 
\end{align*}

Consider the sequence $13,22,25,28$.  Then, $a=13$, $d=3$, $K=2$, $k=5$ and $q=3$. By Theorem \ref{th:aKk}, we have 
\begin{align*}
\sum_{i=1}^{a-1}m_i&=22 + 25 + 28 + 44 + 47 + 50 + 53 + 56 + 72 + 75 + 81 + 84\\
&=637\,,\\
\sum_{i=1}^{a-1}m_i^2&=22^2 + 25^2 + 28^2 + 44^2 + 47^2 + 50^2 + 53^2 + 56^2 + 72^2 + 75^2\\
&\quad +  81^2 + 84^2\\
&=38909 
\end{align*} 
and $s(13,22,25,28)=1192$. 

Consider the sequence $10,22,25,28,31,34,37,40$.  Then, $a=10$, $d=3$, $K=3$, $k=10$, $q=1$ and $r=3$. By Theorem \ref{th:aKk}, we have 
\begin{align*}
\sum_{i=1}^{a-1}m_i&=22 + 25 + 28 + 31 + 34 + 37 + 53 + 56 + 59\\
&=345\,,\\
\sum_{i=1}^{a-1}m_i^2&=22^2 + 25^2 + 28^2 + 31^2 + 34^2 + 37^2 + 53^2 + 56^2 + 59^2\\
&=14805 
\end{align*} 
and $s(10,22,25,28,31,34,37,40)=576$.

\subsection{Special patterns} 

For an integer $a\ge 2$, let us consider the sequence $a,a+2,a+3,a+4$. So, $K=1$, $k=4$ and $d=1$. Nonnegative integers $q$ and $r$ are determined as 
$$
a+1=4 q+r,\quad 0\le r<4\,. 
$$ 
When $r=2,3$, that is $a\equiv 1,2\pmod 4$, by 
\begin{align*}
\sum_{i=1}^{a-1}m_i&=\frac{a\bigl(a-3+4 q^2+2 r+2 q(r+1)\bigr)}{2}\\ 
&=\frac{a\bigl(a(a+8)-(r-3)^2\bigr)}{8}
\end{align*} 
and 
\begin{align*}
&\sum_{i=1}^{a-1}m_i^2\\
&=\frac{1}{6}a\bigl(2 a^2+(8 q^3+6(r+1)q^2+4(3 r-5)q+6 r-21)a\\
&\qquad +64 q^3 +12(4 r+5)q^2+(6 r^2+54 r-4)q+6 r^2+6 r+1\bigr)\\ 
&=\frac{1}{48}a\bigl(a^4+14 a^3-3(r^2-6 r-13)a^2\\
&\qquad +2(r^3-18 r^2+77 r-64)a+(4 r^3-42 r^2+104 r+38)\bigr)\,, 
\end{align*} 
we have 
\begin{align*} 
&s(a,a+2,a+3,a+4)\\
&=\frac{(a-r+5)\bigl(a^3+(r+3)a^2-2(r^2-8 r+8)a-4 r^2+22 r+6\bigr)}{96}\,. 
\end{align*}
When $r=0$, that is $a\equiv 3\pmod 4$, by 
\begin{align*}
\sum_{i=1}^{a-1}m_i&=\frac{a(a-1+4 q^2+2 q)}{2}\\ 
&=\frac{a(a^2+8 a-1)}{8}
\end{align*} 
and 
\begin{align*}
\sum_{i=1}^{a-1}m_i^2
&=\frac{1}{6}a\bigl(2 a^2+(8 q^3+6 q^2-8 q-3)a+64 q^3+60 q^2-4 q+1\bigr)\\ 
&=\frac{a(a^4+14 a^3+63 a^2+40 a+38)}{48}\,, 
\end{align*} 
we have 
\begin{align*} 
s(a,a+2,a+3,a+4)
=\frac{(a+1)(a+5)(a^2+2 a+6)}{96}\,. 
\end{align*}
When $r=1$, that is $a\equiv 0\pmod 4$,  by 
\begin{align*}
\sum_{i=1}^{a-1}m_i&=\frac{a\bigl(a+(2 q+1)^2\bigr)}{2}\\ 
&=\frac{a(a^2+8 a+4)}{8}
\end{align*} 
and 
\begin{align*}
\sum_{i=1}^{a-1}m_i^2
&=\frac{1}{6}a\bigl(2 a^2+(8 q^3+12 q^2+4 q+3)a\\
&\qquad +64 q^3 +108 q^2+56 q+13\bigr)\\ 
&=\frac{a(a^4+14 a^3+78 a^2+136 a+108)}{48}\,, 
\end{align*} 
we have 
\begin{align*} 
s(a,a+2,a+3,a+4)
=\frac{(a+4)(a^3+4 a^2+22 a+24)}{96}\,. 
\end{align*}

In conclusion, we have the following. 

\begin{Cor}    
For $a\ge 2$, we have 
$$ 
s(a,a+2,a+3,a+4)=\begin{cases}
\dfrac{(a+4)(a^3+4 a^2+22 a+24)}{96}&\text{if $a\equiv 0\pmod 4$}\\ 
\dfrac{(a+3)(a^3+5 a^2+8 a+34)}{96}&\text{if $a\equiv 1\pmod 4$}\\ 
\dfrac{(a+2)(a^3+6 a^2+14 a+36)}{96}&\text{if $a\equiv 2\pmod 4$}\\ 
\dfrac{(a+5)(a^3+3 a^2+8 a+6)}{96}&\text{if $a\equiv 3\pmod 4$}\,. 
\end{cases}
$$ 
\label{cor:a234}
\end{Cor}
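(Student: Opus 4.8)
The plan is to obtain Corollary \ref{cor:a234} as a direct specialization of Theorem \ref{th:aKk}: the sequence $a,a+2,a+3,a+4$ is exactly the case $K=1$, $k=4$, $d=1$. Since $K\le(k-1)/2$ reads $1\le 3/2$, the hypotheses of Theorem \ref{th:aKk} hold for every $a\ge 2$, so no extra admissibility analysis is needed. The defining relation \eqref{eq:qr} becomes $a+1=4q+r$ with $0\le r<4$, so the residue of $a$ modulo $4$ determines $r$ uniquely: $a\equiv 0$ gives $r=1$, $a\equiv 1$ gives $r=2$, $a\equiv 2$ gives $r=3$, and $a\equiv 3$ gives $r=0$. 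This already shows that the four congruence classes in the statement are precisely the four possible values of $r$.

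Next I would route each value of $r$ to the correct branch of Theorem \ref{th:aKk}. Because $K=1$, the inequality $r>K$ holds exactly for $r\in\{2,3\}$, i.e. for $a\equiv 1,2\pmod 4$, and then the first displayed formula of Theorem \ref{th:aKk} applies; the inequality $0\le r\le K$ holds exactly for $r\in\{0,1\}$, i.e. for $a\equiv 3,0\pmod 4$, and then the second applies. Substituting $K=1$, $k=4$, $d=1$ collapses each long expression into a polynomial in $a$ of degree $4$ with coefficients depending on $r$, over the denominator $12k^2=192$; a common factor $2$ then brings the denominator down to $96$. Concretely this is the content of the computations of $\sum_{i=1}^{a-1}m_i$ and $\sum_{i=1}^{a-1}m_i^2$ recorded case by case just before the corollary, fed into the third formula of Lemma \ref{lem1}.

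Finally I would plug the numerical value of $r$ into each simplified expression and factor. For $r=2$ the factor $a-r+5$ becomes $a+3$ and the cubic cofactor becomes $a^3+5a^2+8a+34$; for $r=3$ one gets $a+2$ and $a^3+6a^2+14a+36$; for $r=1$ the second branch yields $a+4$ and $a^3+4a^2+22a+24$; and for $r=0$ it yields $(a+1)(a+5)(a^2+2a+6)$, which I would rewrite as $(a+5)(a^3+3a^2+8a+6)$ to match the stated shape. Assembling the four cases gives the piecewise formula of Corollary \ref{cor:a234}.

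The only nontrivial part is the algebra: expanding and recombining the degree-$4$ polynomials in $a$, verifying the cancellation that reduces the denominator from $192$ to $96$, and recognizing the linear factor of $a$ in each residue class so the answer appears already factored. This is routine but error-prone, so I would double-check it with a computer algebra system and, as an independent sanity check, compute the set of non-representable integers by hand for a few small values such as $a=2,3,4,5$ and compare with the four branches.
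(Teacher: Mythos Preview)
Your proposal is correct and follows essentially the same route as the paper: specialize to $K=1$, $k=4$, $d=1$, use $a+1=4q+r$ to translate residues of $a$ into values of $r$, send $r\in\{2,3\}$ to the first branch of Theorem~\ref{th:aKk} and $r\in\{0,1\}$ to the second, and simplify. The paper presents the computation via the intermediate Ap\'ery sums $\sum m_i$ and $\sum m_i^2$ together with Lemma~\ref{lem1} rather than by quoting Theorem~\ref{th:aKk} directly, but as you note these are the same calculation.
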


For example, 
\begin{align*}
s(12,14,15,16)&=432\,,\\
s(13,15,16,17)&=530\,,\\
s(14,16,17,18)&=692\,,\\
s(15,17,18,19)&=870\,.
\end{align*}

From Theorem \ref{cor:aKk-fn}, we have 
$$  
g(a,a+2,a+3,a+4)=\begin{cases}
\dfrac{a^2+4 a+4}{4}&\text{if $a\equiv 0\pmod 4$}\\ 
\dfrac{a^2+3 a+4}{4}&\text{if $a\equiv 1\pmod 4$}\\ 
\dfrac{a^2+2 a+4}{4}&\text{if $a\equiv 2\pmod 4$}\\ 
\dfrac{a^2+a+4}{4}&\text{if $a\equiv 3\pmod 4$}\,. 
\end{cases}
$$ 
Notice that $r=0,1,2,3$ implies that $a\equiv 3,0,1,2\pmod 4$, respectively.  
By using the floor function, we can rewritten as follows.  

\begin{Cor}    
For $a\ge 2$, we have 
$$ 
g(a,a+2,a+3,a+4)=\left(1+\fl{\frac{a}{4}}\right)a+1\,. 
$$ 
\label{cor:g234}
\end{Cor}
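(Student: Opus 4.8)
The plan is to derive Corollary~\ref{cor:g234} directly from the explicit formula
$$
g(a,a+2,a+3,a+4)=\begin{cases}
\dfrac{a^2+4a+4}{4}&\text{if }a\equiv 0\pmod 4\\
\dfrac{a^2+3a+4}{4}&\text{if }a\equiv 1\pmod 4\\
\dfrac{a^2+2a+4}{4}&\text{if }a\equiv 2\pmod 4\\
\dfrac{a^2+a+4}{4}&\text{if }a\equiv 3\pmod 4
\end{cases}
$$
obtained just above from Theorem~\ref{cor:aKk-fn} (with $K=1$, $k=4$, $d=1$), and to check that in each residue class the stated closed form $\bigl(1+\fl{a/4}\bigr)a+1$ reproduces it. So the whole argument is a case check modulo $4$.

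First I would fix the residue of $a$ modulo $4$ and write $a=4m+j$ with $j\in\{0,1,2,3\}$, so that $\fl{a/4}=m$ when $j\in\{0,1,2,3\}$. In each case $\bigl(1+\fl{a/4}\bigr)a+1=(1+m)a+1=a+ma+1$, and since $ma=\frac{(a-j)a}{4}=\frac{a^2-ja}{4}$, this equals $a+\frac{a^2-ja}{4}+1=\frac{a^2+(4-j)a+4}{4}$. Comparing with the piecewise expression above: for $j=0$ we get $\frac{a^2+4a+4}{4}$; for $j=1$, $\frac{a^2+3a+4}{4}$; for $j=2$, $\frac{a^2+2a+4}{4}$; for $j=3$, $\frac{a^2+a+4}{4}$. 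These match the four cases exactly, so the identity holds for every $a\ge 2$.

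There is essentially no obstacle here: the content is entirely contained in Theorem~\ref{cor:aKk-fn} and the preceding case analysis, and the corollary is just the observation that the coefficient $4-j$ of $a$ in the numerator is precisely $4-(a\bmod 4)$, which is what $4\bigl(1+\fl{a/4}\bigr)-4 \cdot \fl{a/4}\cdot\frac{?}{}$\ldots more cleanly, that $4+4\fl{a/4}=a+(4-j)$ since $a=4\fl{a/4}+j$. The only thing worth double-checking is the boundary: that the reindexing $r=0,1,2,3\mapsto a\equiv 3,0,1,2\pmod 4$ noted in the text is used consistently, but since we are comparing two already-simplified closed forms in $a$ alone, this bookkeeping has already been absorbed and the verification is immediate.

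\begin{proof}
By Theorem~\ref{cor:aKk-fn} with $K=1$, $k=4$, $d=1$ we obtained the four-case formula displayed above. Write $a=4\fl{a/4}+j$ with $j=a-4\fl{a/4}\in\{0,1,2,3\}$. Then
$$
\left(1+\fl{\frac{a}{4}}\right)a+1=a+\fl{\frac{a}{4}}a+1=a+\frac{(a-j)a}{4}+1=\frac{a^2+(4-j)a+4}{4}\,.
$$
For $j=0,1,2,3$ the numerator is $a^2+4a+4,\ a^2+3a+4,\ a^2+2a+4,\ a^2+a+4$ respectively, which are exactly the numerators in the four cases of the formula for $g(a,a+2,a+3,a+4)$ corresponding to $a\equiv 0,1,2,3\pmod 4$. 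Hence $g(a,a+2,a+3,a+4)=\bigl(1+\fl{a/4}\bigr)a+1$ for all $a\ge 2$.
\end{proof}
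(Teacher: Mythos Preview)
Your proof is correct and follows exactly the route the paper takes: from the four-case formula obtained via Theorem~\ref{cor:aKk-fn} with $K=1$, $k=4$, $d=1$, you verify case by case that $\bigl(1+\fl{a/4}\bigr)a+1$ reproduces each branch, which is precisely the unification the paper asserts. The only cosmetic issue is the unfinished fragment ``$4\bigl(1+\fl{a/4}\bigr)-4\cdot\fl{a/4}\cdot\frac{?}{}$'' in your informal discussion, but the formal proof block is clean and complete.
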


From Theorem \ref{cor:aKk-sn}, we have the following. 

\begin{Cor}    
For $a\ge 2$, we have 
$$ 
n(a,a+2,a+3,a+4)=\begin{cases}
\dfrac{a^2+4 a+8}{8}&\text{if $a\equiv 0\pmod 4$}\\ 
\dfrac{a^2+4 a+3}{8}&\text{if $a\equiv 1\pmod 4$}\\ 
\dfrac{a^2+4 a+4}{8}&\text{if $a\equiv 2\pmod 4$}\\ 
\dfrac{a^2+4 a+3}{8}&\text{if $a\equiv 3\pmod 4$}\,. 
\end{cases} 
$$ 
\label{cor:g234}
\end{Cor}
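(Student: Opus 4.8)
The plan is to obtain this corollary as a direct specialization of Theorem \ref{cor:aKk-sn}. The sequence $a,a+2,a+3,a+4$ is exactly $a,a+(K+1)d,\dots,a+kd$ with $K=1$, $k=4$, $d=1$; since $\gcd(a,1)=1$ and $1=K\le(k-1)/2=3/2$, all hypotheses of Theorems \ref{th:aKk} and \ref{cor:aKk-sn} are met. The remainder is defined by $a+1=4q+r$ with $0\le r<4$, so
\[
r=\begin{cases}1&\text{if }a\equiv 0\pmod 4\\ 2&\text{if }a\equiv 1\pmod 4\\ 3&\text{if }a\equiv 2\pmod 4\\ 0&\text{if }a\equiv 3\pmod 4.\end{cases}
\]

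First I would sort out which branch of Theorem \ref{cor:aKk-sn} applies in each case: the branch $r>K=1$ covers $r\in\{2,3\}$, i.e.\ $a\equiv 1,2\pmod 4$, while the branch $0\le r\le K=1$ covers $r\in\{0,1\}$, i.e.\ $a\equiv 3,0\pmod 4$. Substituting $K=1$, $k=4$, $d=1$ into the two formulas of Theorem \ref{cor:aKk-sn}, and using $2(K-1)a=0$, $(a+2K-1)kd=4a+4$, $K(K-k-2)=-5$, $-r(r-k-2)=6r-r^2$, gives
\[
n(a,a+2,a+3,a+4)=\frac{a^2+4a-5-r^2+6r}{8}\quad(r=2,3),
\]
\[
n(a,a+2,a+3,a+4)=\frac{a^2+4a+3-r^2+6r}{8}\quad(r=0,1).
\]

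The remaining step is pure substitution: $r=1$ gives $6r-r^2=5$ and $n=(a^2+4a+8)/8$; $r=2$ gives $6r-r^2=8$ and $n=(a^2+4a+3)/8$; $r=3$ gives $6r-r^2=9$ and $n=(a^2+4a+4)/8$; $r=0$ gives $n=(a^2+4a+3)/8$. Pairing these with the residue classes of $a$ displayed above produces the stated table. There is no genuine obstacle here; the only thing requiring attention is aligning the $r>K$ versus $r\le K$ dichotomy of Theorem \ref{cor:aKk-sn} with the four residues of $a$ modulo $4$, and keeping straight that the constant $-5$ in the numerator accompanies $a\equiv 1,2$ while $+3$ accompanies $a\equiv 0,3$.
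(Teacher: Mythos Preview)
Your argument is correct and follows exactly the paper's approach: the paper simply states that the corollary follows ``From Theorem~\ref{cor:aKk-sn}'', and you have supplied the routine specialization $K=1$, $k=4$, $d=1$ together with the case split on $r$ that the paper leaves implicit. The arithmetic and the pairing of $r\in\{0,1,2,3\}$ with the residues of $a\pmod 4$ are all accurate.
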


The sequence $a,a+3,a+4,a+5$ also satisfies the condition $1\le K\le(k-1)/2$ as $K=2$, $k=5$ and $d=1$.  

\begin{Cor}    
For $a\ge 3$, we have 
$$ 
s(a,a+3,a+4,a+5)=\begin{cases}
\dfrac{(a+5)(a^3+8 a^2+55 a+90)}{150}&\text{if $a\equiv 0\pmod 5$}\\ 
\dfrac{(a+4)(a^3+9 a^2+35 a+105)}{150}&\text{if $a\equiv 1\pmod 5$}\\ 
\dfrac{(a+3)(a^3+10 a^2+41 a+110)}{150}&\text{if $a\equiv 2\pmod 5$}\\ 
\dfrac{(a+2)(a^3+11 a^2+43 a+105)}{150}&\text{if $a\equiv 3\pmod 5$}\\ 
\dfrac{(a+6)(a^3+7 a^2+41 a+65)}{150}&\text{if $a\equiv 4\pmod 5$}\,. 
\end{cases}
$$ 
\label{cor:a345}
\end{Cor}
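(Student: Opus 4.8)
The plan is to specialize Theorem~\ref{th:aKk} to the sequence $a,a+3,a+4,a+5$ and then let $a$ run through its residue classes modulo~$5$.

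First I would pin down the parameters. Writing the sequence as $a,a+(K+1)d,a+(K+2)d,\dots,a+kd$, the pattern $a,a+3,a+4,a+5$ is the case $K=2$, $k=5$, $d=1$: there are $k-K+1=4$ terms, the hypothesis $1\le K\le(k-1)/2=2$ holds (with equality), and $\gcd(a,d)=\gcd(a,1)=1$ automatically. Here $q=\fl{(a+K)/k}=\fl{(a+2)/5}$, which is $\ge 1$ exactly when $a\ge 3$ — this is why the corollary is restricted to $a\ge 3$. The key point is that $r=a+2-5q$ is nothing but $a+2$ reduced modulo~$5$, hence is a function of $a\bmod 5$: one gets $r=2,3,4,0,1$ according as $a\equiv 0,1,2,3,4\pmod 5$. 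Comparing each $r$ with $K=2$, the branch ``$r>K$'' of Theorem~\ref{th:aKk} applies exactly when $a\equiv 1,2\pmod 5$ (so $r=3,4$), and the branch ``$0\le r\le K$'' applies when $a\equiv 0,3,4\pmod 5$ (so $r=2,0,1$).

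The rest is substitution and algebraic cleanup. For each of the five classes I would substitute $K=2$, $k=5$, $d=1$ and the appropriate value of $r$ into the relevant displayed formula of Theorem~\ref{th:aKk}. Each such substitution collapses the right-hand side to a quartic in $a$ over $12k^2=300$; one then checks that the numerator is even (so the denominator reduces to $150$) and factors out one linear factor. For the four classes with $r\ne 0$ this linear factor is $a-r+k+K=a-r+7$, namely $a+5,a+4,a+3,a+6$ for $r=2,3,4,1$ respectively (note $a-r+7=k\cl{(a+K)/k}$ in those cases, cf.\ Theorem~\ref{cor:aKk-fn}); for the class $a\equiv 3\pmod 5$ the same factor $a+7$ instead hides inside the remaining cubic, and one displays $a+2$ as the linear factor. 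Reading the five outputs against the residue labels gives the four-case formula, and a couple of numerical checks (for instance $s(5,8,9,10)=46$, $s(9,12,13,14)=173$, $s(12,15,16,17)=377$) confirm the reductions.

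There is no conceptual obstacle: the statement is a plain specialization of Theorem~\ref{th:aKk}. The genuine work — and the only place to slip — is performing the degree-four reduction and factorization five separate times without arithmetic errors, while keeping straight which branch and which value of $r$ goes with which residue class; anticipating the common linear factor $a-r+7$ is the one piece of foresight that lightens the bookkeeping. If one would rather not lean on the fully collapsed formula, the same outcome follows by substituting into the intermediate expressions for $\sum_{i=1}^{a-1}m_i$ and $\sum_{i=1}^{a-1}m_i^2$ derived in the two cases preceding Theorem~\ref{th:aKk} and then applying Lemma~\ref{lem1}, exactly as was done for $a,a+2,a+3,a+4$.
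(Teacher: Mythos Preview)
Your proposal is correct and follows essentially the same approach as the paper: the paper states this corollary immediately after noting that ``the sequence $a,a+3,a+4,a+5$ also satisfies the condition $1\le K\le(k-1)/2$ as $K=2$, $k=5$ and $d=1$,'' leaving the specialization of Theorem~\ref{th:aKk} implicit just as it was carried out in detail for $a,a+2,a+3,a+4$. Your identification of which branch of Theorem~\ref{th:aKk} governs each residue class, the observation that the restriction $a\ge 3$ corresponds to $q\ge 1$, and the anticipated linear factor $a-r+7$ are all accurate refinements of that same route.
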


\begin{Cor}    
For $a\ge 3$, we have 
$$ 
g(a,a+3,a+4,a+5)=\left(1+\fl{\frac{a+1}{5}}\right)a+2\,. 
$$ 
\label{cor:g345}
\end{Cor}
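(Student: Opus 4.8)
\textbf{Proof proposal for Corollary \ref{cor:g345}.}

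The plan is to specialize Theorem \ref{cor:aKk-fn} to the parameters of the sequence $a,a+3,a+4,a+5$, namely $d=1$, $K=2$, $k=5$, and then repackage the case distinction into a single floor expression. First I would record that with $K=2$ and $k=5$ the defining congruence \eqref{eq:qr} reads $a+2=5q+r$ with $0\le r<5$, so $r$ is determined by the residue of $a$ modulo $5$: explicitly $r\equiv a+2\pmod 5$, which gives $r=2,3,4,0,1$ according as $a\equiv 0,1,2,3,4\pmod 5$ respectively. In particular $r=0$ happens exactly when $a\equiv 3\pmod 5$.

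Next I would plug into the closed form from Theorem \ref{cor:aKk-fn}. For $r>0$ that theorem gives
\[
g(a,a+3,a+4,a+5)=\left(\cl{\frac{a+2}{5}}-1\right)a+(a+2)\cdot 1=\left(\cl{\frac{a+2}{5}}-1\right)a+a+2 .
\]
Since $\cl{(a+2)/5}=\fl{(a+2-1)/5}+1=\fl{(a+1)/5}+1$ whenever $5\nmid(a+2)$, the coefficient of $a$ becomes $\fl{(a+1)/5}+1$, so $g=\bigl(1+\fl{(a+1)/5}\bigr)a+2$, as claimed. For the remaining case $r=0$, i.e.\ $a\equiv 3\pmod 5$, Theorem \ref{cor:aKk-fn} gives $g=\frac{a(a+2-5)}{5}+(a+2)=\frac{a(a-3)}{5}+a+2$; here $\fl{(a+1)/5}=\frac{a-3}{5}$ because $a+1\equiv 4\pmod 5$, so $\bigl(1+\fl{(a+1)/5}\bigr)a+2=\frac{a(a-3)}{5}+a+2$, matching. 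Thus the two branches of Theorem \ref{cor:aKk-fn} collapse to the uniform formula in all five residue classes.

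I do not anticipate a genuine obstacle here: the only thing to be careful about is the ceiling/floor bookkeeping, in particular checking that $\cl{(a+2)/5}-1=\fl{(a+1)/5}$ precisely when $5\nmid a+2$ and that the $r=0$ branch also satisfies $g=\bigl(1+\fl{(a+1)/5}\bigr)a+2$ (equivalently, that the piecewise formula displayed just before Corollary \ref{cor:a345} for $g(a,a+3,a+4,a+5)$, if written out as was done for $a,a+2,a+3,a+4$, agrees class by class with $\bigl(1+\fl{(a+1)/5}\bigr)a+2$). A clean way to present it is simply to verify the identity $\bigl(1+\fl{(a+1)/5}\bigr)a+2=\bigl(\cl{(a+2)/5}-1\bigr)a+(a+2)$ directly from the division of $a+2$ by $5$ and then invoke Theorem \ref{cor:aKk-fn}.
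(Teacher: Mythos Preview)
Your proposal is correct and follows exactly the route the paper intends: specialize Theorem~\ref{cor:aKk-fn} with $d=1$, $K=2$, $k=5$ and rewrite the ceiling as a floor. One small simplification: the identity $\cl{(a+2)/5}=\fl{(a+1)/5}+1$ holds for \emph{all} integers $a\ge 1$ (it is the general fact $\cl{n/m}=\fl{(n-1)/m}+1$ for positive integers), so your separate treatment of the $r=0$ branch is unnecessary---you can invoke the unified ceiling form of Theorem~\ref{cor:aKk-fn} once and be done, as you yourself note at the end.
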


\begin{Cor}    
For $a\ge 3$, we have 
$$ 
n(a,a+3,a+4,a+5)=\begin{cases}
\dfrac{a^2+7 a+20}{10}&\text{if $a\equiv 0\pmod 5$}\\ 
\dfrac{a^2+7 a+12}{10}&\text{if $a\equiv 1\pmod 5$}\\ 
\dfrac{a^2+7 a+12}{10}&\text{if $a\equiv 2\pmod 5$}\\ 
\dfrac{a^2+7 a+10}{10}&\text{if $a\equiv 3\pmod 5$}\\  
\dfrac{a^2+7 a+16}{10}&\text{if $a\equiv 4\pmod 5$}\,.
\end{cases} 
$$ 
\label{cor:g345}
\end{Cor}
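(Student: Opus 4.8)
The plan is to obtain this statement as a direct specialization of Theorem~\ref{cor:aKk-sn}. For the sequence $a,a+3,a+4,a+5$ one reads off $K=2$, $k=5$, $d=1$, and the hypothesis $K\le(k-1)/2$ of that theorem holds (with equality), so it applies verbatim. The only quantity depending on the residue of $a$ is
$$
r=a+K-\fl{\frac{a+K}{k}}k=(a+2)-5\fl{\frac{a+2}{5}},
$$
i.e.\ $r$ is the least nonnegative residue of $a+2$ modulo $5$. First I would tabulate this: $r=2,3,4,0,1$ according as $a\equiv 0,1,2,3,4\pmod 5$.

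Next I would record which branch of Theorem~\ref{cor:aKk-sn} is in force in each case. Since $K=2$, the branch ``$r>K$'' applies exactly when $r\in\{3,4\}$, that is for $a\equiv 1,2\pmod 5$, while the branch ``$0\le r\le K$'' applies when $r\in\{0,1,2\}$, that is for $a\equiv 0,3,4\pmod 5$.

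Then I would substitute $K=2$, $k=5$, $d=1$ into the two displayed formulas of Theorem~\ref{cor:aKk-sn}. A short computation shows that for $r>K$ the expression collapses to $n=(a^2+7a-r^2+7r)/10$, and for $0\le r\le K$ it collapses to $n=(a^2+7a+10-r^2+7r)/10$. Plugging in the value of $r$ for each residue class (here $-r^2+7r$ equals $12$ for $r=3$ and for $r=4$, and equals $10,0,6$ for $r=2,0,1$ respectively) reproduces exactly the five numerators $a^2+7a+20$, $a^2+7a+12$, $a^2+7a+12$, $a^2+7a+10$, $a^2+7a+16$ claimed in the statement.

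There is no real obstacle here beyond bookkeeping: the entire content is already packaged in Theorem~\ref{cor:aKk-sn}, and the work consists of tracking the map $a\bmod 5\mapsto r$ together with the two-case split of $r$ against $K=2$. As a consistency check one can compute a couple of small instances directly—for example the sequence $5,8,9,10$ ($a\equiv 0$, predicted value $8$) or $7,10,11,12$ ($a\equiv 2$, predicted value $(49+49+12)/10=11$)—and confirm the count of gaps agrees.
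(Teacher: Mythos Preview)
Your proposal is correct and follows exactly the route the paper intends: the corollary is stated without proof immediately after the remark that the sequence $a,a+3,a+4,a+5$ has $K=2$, $k=5$, $d=1$ and satisfies $K\le(k-1)/2$, so it is obtained by specializing Theorem~\ref{cor:aKk-sn}. Your bookkeeping of the residue $r$ and the two branches, together with the simplification to $(a^2+7a-r^2+7r)/10$ and $(a^2+7a+10-r^2+7r)/10$, is precisely what is needed.
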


However, the sequence $a,a+4,a+5,a+6$ does not satisfy the condition $1\le K\le(k-1)/2$. In fact, if the gap $K$ becomes bigger compared to $k$, the situation becomes more complicated. This case is discussed in the next section.

\section{Bigger gaps}  

When the gap $K$ is bigger, the situation becomes more complicated.  
Consider the same almost arithmetic sequence $a,a+(K+1)d,a+(K+2)d,\dots,a+k d$ with $a\ge 2$ and $\gcd(a,d)=1$. 
In this section, we treat with the case when $K>(k-1)/2$. Nevertheless, this case cannot be treated in a unified manner. Cases still need to be divided.

\subsection{General case} 

Assume that $(k-1)/2<K\le(2 k-2)/3$. Nonnegative integers $q$ and $r$ are determined as in (\ref{eq:qr}).   
We also assume that $q=\fl{(a+K)/k}\ge 2$.   
In addition to these conditions,   
we shall discuss four cases separately: $r=0$, $1\le r\le k-K-1$, $k-K\le r\le K$ or $K+1\le r<k$.

\noindent 
{\bf Case 1:}  
When $r=a+K-\fl{(a+K)/k}k=0$, all the elements of Ap\'ery set of $a,a+(K+1)d,a+(K+2)d,\dots,a+k d$ excluding $0\pmod  a$ can be determined in the following table. 
{\tiny  
\begin{align*}
&&\,&&\,&&\,&&\,&a+(K+1)d&\,&\dots&\,&a+k d\\
&&\,&&\,&2 a+(2K+2)d&\,&\dots&\,&2 a+(K+k+1)d&\,&\dots&\,&2 a+2 k d\\
&3 a+(2 k+1)d&\,&\dots&\,&3 a+(2K+k+2)d&\,&\dots&\,&3 a+(K+2 k+1)d&\,&\dots&\,&3 a+3 k d\\&\dots&\,&\dots&\,&\dots&\,&\dots&\,&\dots&\,&\dots&\,&\dots\\
&q a+((q-1)k+1)d&\,&\dots&\,&&\,&\!\!\!\!\!\dots&\,&\!\!\!\!\![q a+(q k-K)d]&\,&\dots&\,&q a+q k d\\
&&\,&&\,&\!\!\!\!\!(q+1)a+(q k+k-K+1)d&\,&\!\!\!\!\!\dots&\,&\!\!\!\!\!(q+1)a+(q k+K+1)d&\,&&\,&
\end{align*}
} 
Since any of the terms $2 a+(k+1)d,\dots,2 a+(2 K+1)d$ cannot be expressed in terms of $a,a+(K+1)d,a+(K+2)d,\dots,a+k d$, they cannot be elements of the Ap\'ery set, so do not exist in this table. In addition, the term $q a+(q k-K)d=q a+a d\equiv 0\pmod a$ cannot exist in this table, and the terms $(q+1)a+(q k+1)d,\dots,(q+1)a+(q k+k-K)d$ are also out.   
Note that by $\gcd(a,d)=1$, 
\begin{align*} 
&\{(K+1)d,(K+2)d,\dots,k d,(2 K+2)d,\dots,(a-1)d,\\
&\qquad (a+1)d,\dots,(a+K)d,(a+k+1)d,\dots,(a+2 K+1)d\}\\
&=\{1,2,\dots,a-1\}\pmod a\,.
\end{align*}
It is not difficult to see that all the elements in this table can be represented by $a+(K+1)d,a+(K+2)d,\dots,a+k d$, and that none of the elements can be represented by subtracting $a$. 
Hence,   
\begin{align*} 
&\sum_{i=1}^{a-1}m_i\\
&=\bigl((1+2+\dots+q)k-K-2(2 K+1-k)-q+(q+1)(2 K-k+1)\bigr)a\\ 
&\quad +\bigl((k+1)+\cdots+k+(2 K+2)+\cdots+(a-1)+(a+1)+\cdots+(a+K)\\
&\qquad +(a+k+1)+\cdots+(a+2 K+1)\bigr)d\\
&=\left(\frac{q(q+1)}{2}k-5 K-2+2 k-q+(q+1)(2 K-k+1)\right)a\\
&\quad+\left(\frac{a(a-1)}{2}+(3 K-k+1)a\right)d\\
&=\frac{a\bigl(a^2+(6 K-k)a+K(5 K-7 k)+2 k(k-1)\bigr)}{2 k}
+\left(\frac{a+1}{2}+3 K-k\right)a d
\end{align*} 
and 
\begin{align*} 
&\sum_{i=1}^{a-1}m_i^2\\
&=\bigl((1^2+2^2+\dots+q^2)k-K-2^2(2 K+1-k)-q^2+(q+1)^2(2 K-k+1)\bigr)a^2\\ 
&\quad +\bigl((k+1)^2+\cdots+k^2+(2 K+2)^2+\cdots+(a-1)^2+(a+1)^2+\cdots+(a+K)^2\\
&\qquad +(a+k+1)^2+\cdots+(a+2 K+1)^2\bigr)d^2\\
&\quad +2 a d\biggl(\left(\frac{k(k+1)}{2}-\frac{K(K+1)}{2}\right)+2\left(\frac{2 k(2 k+1)}{2}-\frac{k(k+1)}{2}\right)\\
&\qquad -2\left(\frac{(2 K+1)(2 K+2)}{2}-\frac{k(k+1)}{2}\right)+3\left(\frac{3 k(3 k+1)}{2}-\frac{2 k(2 k+1)}{2}\right)\\
&\qquad +\cdots+q\left(\frac{q k(q k+1)}{2}-\frac{(q k-q)(q k-q+1)}{2}\right)\\
&\qquad -q a+(q+1)\left(\frac{(a+2 K+1)(a+2 K+2)}{2}-\frac{(a+k)(a+k+1)}{2}\right)
\biggr)\\
&=\frac{a^2}{6 k^2}\bigl(2 a^3+3(6 K-k)a^2+(6 K(5 K+3 k))a+14 K^3\\
&\qquad +(21 K-53 k+12)K k+18 k^2(k-1)\bigr)a^2\\
&\quad +\frac{a}{6}\bigl(2 a^2+3(6 K-2 k+1)a+6 K(5 K+7 k)-6 k^2-6 k+13\bigr)d^2\\
&\quad +\frac{a d}{6 k}\bigl(4 a^3+3(12 K-3 k+1)a^2+(6 K(10 K+3 k+7)-19 k^2+9 k+12)a\\
&\qquad +K(28 K^2+39 K+12)-3(9 K^2+15 K+4)k-(7 K-6 k-6)k^2\bigr)\,, 
\end{align*} 
by the third formula in Lemma \ref{lem1}, we have 
\begin{align*}   
&s\bigl(a,a+(K+1)d,a+(K+2)d,\dots,a+k d\bigr)\\
&=\frac{1}{12 k^2}\biggl(2 a^4+6(3 K-k)a^3+\bigl(30 K^2-k(7 k-12)\bigr)a^2\\
&\quad +\bigl(14 K^3+12 k^3-4(8 K+3)k^2+6 K(K+2)k\bigr)a-k^2\\
&\quad +\bigl(2 a^2+3(6 K-2 k+1)a+30 K^2+42 K-6 k^2-6 k+13\bigr)k^2 d^2\\ 
&\quad +\bigl(4 a^3+3(12 K-4 k+1)a^2+(60 K^2+42 K-13 k^2+6 k+12)a\\
&\qquad +6 k^3-(7 K-6)k^2-3(9 K^2+15 K+4)k+K(28 K^2+39 K+12)
\bigr)k d\biggr)\,. 
\end{align*}
In addition, by the first and the second formulae in Lemma \ref{lem1}, we have 
$$
g\bigl(a,a+(K+1)d,a+(K+2)d,\dots,a+k d\bigr)
=\frac{a(a+K)}{k}+(a+2 K+1)d 
$$
and 
\begin{align*}  
&n\bigl(a,a+(K+1)d,a+(K+2)d,\dots,a+k d\bigr)\\
&=\frac{a^2+2(3 K-k)a+K(5 K-7 k)+k(2 k-1)}{2 k}+\left(\frac{a+1}{2}+3 K-k\right)d\,,
\end{align*} 
respectively.  


\noindent 
{\bf Case 2:}  
Assume that $1\le r\le k-K-1$. Since $k-K-1<K$, the $q$-th and the ($q+1$)-th lines are replaced by  
\begin{align*} 
\underbrace{q a+((q-1)k+1)d\,\dots\,q a+(a-1)d}_{k-K+r-1}\,\underbrace{\text{[gap]}}_1\,\underbrace{q a+(a+1)d\,\dots\,q a+q k d}_{K-r} 
\end{align*} 
and 
\begin{multline*} 
\underbrace{(q+1)a+(q k+1)d\,\dots\,(q+1)a+(q k+r)d}_{r}\underbrace{\text{[\phantom{gap}gap\phantom{gap}]}}_{k-K}\\
\underbrace{(q+1)a+(a+k+1)d\,\dots\,(q+1)a+(a+2 K+1)d}_{2 K-k+1}\underbrace{\text{[\phantom{gap}gap\phantom{gap}]}}_{k-K-r-1}
\end{multline*}  
from Case 1, respectively.  Then,  
by 
\begin{align*} 
&\sum_{i=1}^{a-1}m_i\\
&=\biggl(\frac{q(q+1)}{2}k-K-2(2 K+1-k)-q+(q+1)(2 K-k+1+r)\biggr)a\\ 
&\quad +\biggl(\frac{a(a+1)}{2}+(3 K-k)a\biggr)d\\ 
&=\frac{a\bigl(a^2+(6K-k)a+K(5 K-7 k)+2 k(k-1)-r(r+4 K-3 k)\bigr)}{2 k}\\
&\quad +\left(\frac{a+1}{2}+3 K-k\right)a d
\end{align*}
and  
\begin{align*} 
&\sum_{i=1}^{a-1}m_i^2\\
&=\biggl(
\frac{q(q+1)(2 q+1)}{6}k-9 K-4+4 k-q^2+(q+1)^2(2 K-k+1+r)
\biggr)a^2\\ 
&\quad +\left(\frac{(a+1)(2 a+1)}{6}+(3 K-k)a+(K+1)(5 K+2)-k(k+1)\right)a d^2\\
&\quad +2 a d\biggl(-\frac{(K+1)(9 K+4)}{2}+k(k+1)+\frac{q k(q+1)\bigl((4 q-1)k+3\bigr)}{12}\\
&\qquad -q a+(q+1)\biggl(\frac{(a+2 K+1)(a+2 K+2)}{2}-\frac{(a+k)(a+k+1)}{2}\\
&\qquad +\left(q k+\frac{r+1}{2}\right)r\biggr)\biggr)\,,  
\end{align*}
we have 
\begin{align*}   
&s\bigl(a,a+(K+1)d,a+(K+2)d,\dots,a+k d\bigr)\\
&=\frac{1}{12 k^2}\biggl(
2 a^4+6(3 K-k)a^3+\bigl(30 K^2-7 k^2+12 k-6(r+4 K-3 k)r\bigr)a^2\\ 
&\quad +\bigl(K^2(14 K+6 k)-4 K k(8 k-3)+12 k^2(k-1)+4 r^3+6(K-2 k)r^2\\ 
&\qquad -(24 K^2-6 K k-8 k^2+12 k)r\bigr)a -k^2\\ 
&\quad +\bigl(2 a^2+3(6 K-2 k+1)a+6 K(5 K+7)-(6 k^2+6 k-13)\bigr)k^2 d^2\\
&\quad +\bigl(
4 a^3+3(12 K-4 k+1)a^2\\
&\qquad +\bigl(6 K(10 K+7 k)-(13 k^2-6 k-12)-6 r(r+4 K-3 k)\bigr)a\\ 
&\qquad +28 K^3-K^2(27 k-39)-K(7 k^2+45 k-12)+6(k-1)k(k+2)
\bigr)k d
\biggr)\,.  
\end{align*} 
This also holds for $r=0$.

We also have 
$$
g\bigl(a,a+(K+1)d,a+(K+2)d,\dots,a+k d\bigr)
=\frac{a(a+K-r)}{k}+(a+2 K+1)d 
$$ 
and 
\begin{align*}  
&n\bigl(a,a+(K+1)d,a+(K+2)d,\dots,a+k d\bigr)\\
&=\frac{a^2+2(3 K-k)a+K(5 K-7 k)+k(2 k-1)-r(r+4 K-3 k)}{2 k}\\
&\quad +\left(\frac{a+1}{2}+3 K-k\right)d\,. 
\end{align*}


\noindent 
{\bf Case 3:}  
Assume that $k-K\le r\le K$. Then, the $q$-th, the ($q+1$)-th and ($q+2$)-th lines are replaced by 
\begin{align*} 
\underbrace{q a+((q-1)k+1)d\,\dots\,q a+(a-1)d}_{k-K+r-1}\,\underbrace{\text{[gap]}}_1\,\underbrace{q a+(a+1)d\,\dots\,q a+q k d}_{K-r} 
\end{align*} 
\begin{multline*} 
\underbrace{(q+1)a+(q k+1)d\,\dots\,(q+1)a+(q k+r)d}_r\underbrace{\text{[\phantom{gap}gap\phantom{gap}]}}_{k-K}\\
\underbrace{(q+1)a+(a+k+1)d\,\dots\,(q+1)a+(a+k+K-r)d}_{K-r}
\end{multline*}
and 
$$
\underbrace{(q+2)a+((q+1)k+1)d\,\dots\,(q+2)a+(a+2 K+1)d}_{K-k+r+1}\,\underbrace{\text{[\phantom{gap}gap\phantom{gap}]}}_{2 k-K-r-1}
$$   
respectively.  Then,  
by 
\begin{align*} 
&\sum_{i=1}^{a-1}m_i\\
&=\biggl(\frac{q(q+1)}{2}k-K-2(2 K+1-k)-q+(q+1)K+(q+2)(K-k+r+1)\biggr)a\\ 
&\quad +\biggl(\frac{a(a+1)}{2}+(3 K-k)a\biggr)d\\ 
&=\frac{a\bigl(a^2+(6K-k)a+(K-r)(5 K-5 k+r)\bigr)}{2 k}
 +\left(\frac{a+1}{2}+3 K-k\right)a d
\end{align*}
and  
\begin{align*} 
&\sum_{i=1}^{a-1}m_i^2\\
&=\biggl(
\frac{q(q+1)(2 q+1)}{6}k-9 K-4+4 k-q^2+(q+1)^2 K+(q+2)^2(K-k+r+1)
\biggr)a^2\\ 
&\quad +\left(\frac{(a+1)(2 a+1)}{6}+(3 K-k)a+(K+1)(5 K+2)-k(k+1)\right)a d^2\\
&\quad +2 a d\biggl(-\frac{(K+1)(9 K+4)}{2}+k(k+1)+\frac{q k(q+1)\bigl((4 q-1)k+3\bigr)}{12}\\
&\qquad -q a+(q+1)\biggl(\frac{(a+K+k-r)(a+K+k-r+1)}{2}-\frac{(a+k)(a+k+1)}{2}\\
&\qquad\quad +\left(q k+\frac{r+1}{2}\right)r\biggr)\\
&\qquad +(q+2)\left((q+1)k(K-k+r+1)+\frac{(K-k+r+1)(K-k+r+1)}{2}\right)
\biggr)\,,  
\end{align*}
we have 
\begin{align*}   
&s\bigl(a,a+(K+1)d,a+(K+2)d,\dots,a+k d\bigr)\\
&=\frac{1}{12 k^2}\biggl(
2 a^4+6(3 K-k)a^3+\bigl(30 K^2+12 K k-19 k^2+24 k-6(r+4 K-5 k)r\bigr)a^2\\ 
&\quad +\bigl(K^2(14 K+18 k)-8 K k(4 k-3)+4 r^3+6(K-4 k)r^2\\ 
&\qquad -(24 K^2-6 K k-32 k^2+24 k)r\bigr)a -k^2\\ 
&\quad +\bigl(2 a^2+3(6 K-2 k+1)a+6 K(5 K+7)-6 k^2-6 k+13\bigr)k^2 d^2\\
&\quad +\bigl(
4 a^3+3(12 K-4 k+1)a^2\\
&\qquad +\bigl(6 K(10 K+2 k+7)-(25 k^2-18 k-12)-6 r(r+4 K-5 k)\bigr)a\\ 
&\qquad +28 K^3-K^2(9 k-39)-K(19 k^2+15 k-12)+2 r^3-3 r^2(2 K+3 k+1)\\
&\qquad +r\bigl(-K(24 K-18 k+36)+(19 k^2+15 k-12)\bigr)
\bigr)k d
\biggr)\,.  
\end{align*} 

We also have 
$$
g\bigl(a,a+(K+1)d,a+(K+2)d,\dots,a+k d\bigr)
=\frac{a(a+K-r+k)}{k}+(a+2 K+1)d 
$$ 
and 
\begin{align*}  
&n\bigl(a,a+(K+1)d,a+(K+2)d,\dots,a+k d\bigr)\\
&=\frac{a^2+2(3 K-k)a+(K-r)(5 K-5 k+r)+k}{2 k}\\
&\quad +\left(\frac{a+1}{2}+3 K-k\right)d\,. 
\end{align*}


\noindent 
{\bf Case 4:}  
Finally, assume that $K+1\le r<k$. Then, the $q$-th line consists of no gaps. The ($q+1$)-th and ($q+2$)-th lines are replaced by  
\begin{multline*} 
\underbrace{(q+1)a+(q k+1)d\,\dots\,(q+1)a+(q k+r-K-1)d}_{r-K-1}\,\underbrace{\text{[gap]}}_1\,\\
\underbrace{(q+1)a+(a+1)d\,\dots\,(q+1)a+(a+K)d}_{K}\,\underbrace{\text{[\phantom{g}gap\phantom{g}]}}_{k-r}
\end{multline*}
and 
$$
\underbrace{\text{[\phantom{g}gap\phantom{g}]}}_{r-K}\,\underbrace{(q+2)a+(a+k+1)d\,\dots\,(q+2)a+(a+2 K+1)d}_{2 K-k+1}\,\underbrace{\text{[\phantom{gap}gap\phantom{gap}]}}_{2 k-K-r-1}
$$   
respectively.  Then,  
by 
\begin{align*} 
&\sum_{i=1}^{a-1}m_i\\
&=\biggl(\frac{q(q+1)}{2}k-K-2(2 K+1-k)+(q+1)(r-1)+(q+2)(2 K-k+1)\biggr)a\\ 
&\quad +\biggl(\frac{a(a+1)}{2}+(3 K-k)a\biggr)d\\ 
&=\frac{a\bigl(a^2+(6K-k)a+K(5 K-3 k)-2 k-r(r+4 K-3 k)\bigr)}{2 k}\\
&\quad +\left(\frac{a+1}{2}+3 K-k\right)a d
\end{align*}
and  
\begin{align*} 
&\sum_{i=1}^{a-1}m_i^2\\
&=\biggl(
\frac{q(q+1)(2 q+1)}{6}k-9 K-4+4 k+(q+1)^2(r-1)+(q+2)^2(2 K-k+1)
\biggr)a^2\\ 
&\quad +\left(\frac{(a+1)(2 a+1)}{6}+(3 K-k)a+(K+1)(5 K+2)-k(k+1)\right)a d^2\\
&\quad +2 a d\biggl(-\frac{(K+1)(9 K+4)}{2}+k(k+1)+\frac{q k(q+1)\bigl((4 q-1)k+3\bigr)}{12}\\
&\qquad +(q+1)\biggl((r-K-1)\left(q k+\frac{r-K}{2}\right)+K\left(a+\frac{K+1}{2}\right)\biggr)\\
&\qquad +(q+2)\left(\frac{(a+2 K+1)(a+2 K+2)}{2}-\frac{(a+k)(a+k+1)}{2}\right)\biggr)\,,  
\end{align*}
we have 
\begin{align*}   
&s\bigl(a,a+(K+1)d,a+(K+2)d,\dots,a+k d\bigr)\\
&=\frac{1}{12 k^2}\biggl(
2 a^4+6(3 K-k)a^3+\bigl(30 K^2+24 K k-19 k^2+12 k-6(r+4 K-3 k)r\bigr)a^2\\ 
&\quad +\bigl(K^2(14 K+30 k)-4 K k(5 k-3)+12 k^2(k-1)+4 r^3+6(K-2 k)r^2\\ 
&\qquad -(24 K^2+18 K k-20 k^2+12 k)r\bigr)a -k^2\\ 
&\quad +\bigl(2 a^2+3(6 K-2 k+1)a+6 K(5 K+7)-6 k^2-6 k+13)\bigr)k^2 d^2\\
&\quad +\bigl(
4 a^3+3(12 K-4 k+1)a^2\\
&\qquad +\bigl(6 K(10 K+4 k+7)-(25 k^2-6 k-12)-6 r(r+4 K-3 k)\bigr)a\\ 
&\qquad +28 K^3-3 K^2(k-13)-K(7 k^2+9 k-12)+2 r^3-3 r^2(2 K+k+1)\\
&\qquad +r\bigl(-6 K(4 K-k+6)+7 k^2+9 k-12\bigr)
\bigr)k d
\biggr)\,. 
\end{align*} 

We also have 
$$
g\bigl(a,a+(K+1)d,a+(K+2)d,\dots,a+k d\bigr)
=\frac{a(a+K-r+k)}{k}+(a+2 K+1)d 
$$ 
and 
\begin{align*}  
&n\bigl(a,a+(K+1)d,a+(K+2)d,\dots,a+k d\bigr)\\
&=\frac{a^2+2(3 K-k)a+K(5 K-3 k)-k-r(r+4 K-3 k)}{2 k}\\
&\quad +\left(\frac{a+1}{2}+3 K-k\right)d\,. 
\end{align*}

In conclusion, we have the Sylvester sums.  

\begin{theorem}  
Let $a$ and $d$ be positive integers with $a\ge 2$ and $\gcd(a,d)=1$, $K$ and $k$ be positive integers with $(k-1)/2<K\le(2 k-2)/3$, and let $r=a+K\fl{(a+K)/k}k$. Assume that $\fl{(a+K)/k}\ge 2$. If $0\le r\le k-K-1$, then 
\begin{align*} 
&s\bigl(a,a+(K+1)d,a+(K+2)d,\dots,a+k d\bigr)\\
&=\frac{1}{12 k^2}\biggl(
2 a^4+6(3 K-k)a^3+\bigl(30 K^2-7 k^2+12 k-6(r+4 K-3 k)r\bigr)a^2\\ 
&\quad +\bigl(K^2(14 K+6 k)-4 K k(8 k-3)+12 k^2(k-1)+4 r^3+6(K-2 k)r^2\\ 
&\qquad -(24 K^2-6 K k-8 k^2+12 k)r\bigr)a -k^2\\ 
&\quad +\bigl(2 a^2+3(6 K-2 k+1)a+6 K(5 K+7)-(6 k^2+6 k-13)\bigr)k^2 d^2\\
&\quad +\bigl(
4 a^3+3(12 K-4 k+1)a^2\\
&\qquad +\bigl(6 K(10 K+7 k)-(13 k^2-6 k-12)-6 r(r+4 K-3 k)\bigr)a\\ 
&\qquad +28 K^3-K^2(27 k-39)-K(7 k^2+45 k-12)+6(k-1)k(k+2)
\bigr)k d
\biggr)\,.  
\end{align*} 
If $k-K\le r\le K$, then 
\begin{align*}   
&s\bigl(a,a+(K+1)d,a+(K+2)d,\dots,a+k d\bigr)\\
&=\frac{1}{12 k^2}\biggl(
2 a^4+6(3 K-k)a^3+\bigl(30 K^2+12 K k-19 k^2+24 k-6(r+4 K-5 k)r\bigr)a^2\\ 
&\quad +\bigl(K^2(14 K+18 k)-8 K k(4 k-3)+4 r^3+6(K-4 k)r^2\\ 
&\qquad -(24 K^2-6 K k-32 k^2+24 k)r\bigr)a -k^2\\ 
&\quad +\bigl(2 a^2+3(6 K-2 k+1)a+6 K(5 K+7)-6 k^2-6 k+13\bigr)k^2 d^2\\
&\quad +\bigl(
4 a^3+3(12 K-4 k+1)a^2\\
&\qquad +\bigl(6 K(10 K+2 k+7)-(25 k^2-18 k-12)-6 r(r+4 K-5 k)\bigr)a\\ 
&\qquad +28 K^3-K^2(9 k-39)-K(19 k^2+15 k-12)+2 r^3-3 r^2(2 K+3 k+1)\\
&\qquad +r\bigl(-K(24 K-18 k+36)+(19 k^2+15 k-12)\bigr)
\bigr)k d
\biggr)\,.  
\end{align*}
If $K+1\le r<k$, then 
\begin{align*}   
&s\bigl(a,a+(K+1)d,a+(K+2)d,\dots,a+k d\bigr)\\
&=\frac{1}{12 k^2}\biggl(
2 a^4+6(3 K-k)a^3+\bigl(30 K^2+24 K k-19 k^2+12 k-6(r+4 K-3 k)r\bigr)a^2\\ 
&\quad +\bigl(K^2(14 K+30 k)-4 K k(5 k-3)+12 k^2(k-1)+4 r^3+6(K-2 k)r^2\\ 
&\qquad -(24 K^2+18 K k-20 k^2+12 k)r\bigr)a -k^2\\ 
&\quad +\bigl(2 a^2+3(6 K-2 k+1)a+6 K(5 K+7)-6 k^2-6 k+13)\bigr)k^2 d^2\\
&\quad +\bigl(
4 a^3+3(12 K-4 k+1)a^2\\
&\qquad +\bigl(6 K(10 K+4 k+7)-(25 k^2-6 k-12)-6 r(r+4 K-3 k)\bigr)a\\ 
&\qquad +28 K^3-3 K^2(k-13)-K(7 k^2+9 k-12)+2 r^3-3 r^2(2 K+k+1)\\
&\qquad +r\bigl(-6 K(4 K-k+6)+7 k^2+9 k-12\bigr)
\bigr)k d
\biggr)\,. 
\end{align*} 
\label{th:ss456} 
\end{theorem}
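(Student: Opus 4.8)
The plan is to compute the Apéry set $\mathrm{Ape}(A,a)$ explicitly, exactly as in the proof of Theorem~\ref{th:aKk}, and then feed the resulting closed forms for $\sum_{i=1}^{a-1}m_i$ and $\sum_{i=1}^{a-1}m_i^2$ into the third formula of Lemma~\ref{lem1}. The starting point is the observation that, reading residues modulo $a$, the element $m_i$ lies on ``line $j$'' of the display table precisely when $m_i=j a+(\text{something})d$ where the multiplier of $d$ ranges over a block of consecutive integers; since $\gcd(a,d)=1$ these blocks, taken together, must cover $\{1,\dots,a-1\}\pmod a$ with one omission (the residue $\equiv 0$, coming from $ad$). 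The key structural fact to justify is that when $(k-1)/2<K\le(2k-2)/3$ the integers $2a+(k+1)d,\dots,2a+(2K+1)d$ are \emph{not} representable (because a single copy of $a+jd$ with $K+1\le j\le k$ gives multiplier of $d$ at most $k$, and two copies give at least $2K+2>k+1$ when $K>(k-1)/2$, so the gap $k+1,\dots,2K+1$ genuinely cannot be hit on line $2$), whereas everything appearing in the table \emph{is} representable and nothing in the table drops to a smaller representative after subtracting $a$. That monotonicity/minimality check, together with the count that the listed elements form a complete nonzero residue system, is what certifies the table is the Apéry set.

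Next I would split into the four cases $r=0$, $1\le r\le k-K-1$, $k-K\le r\le K$, $K+1\le r<k$, according to where the ``wrap-around'' lands relative to the internal gap, and in each case write down the last two or three lines of the table precisely (this is already done in the displays of Cases 1--4 above). The condition $q=\fl{(a+K)/k}\ge 2$ is used so that the generic lines $2,\dots,q-1$ are full of length $k$ and the special behaviour is confined to the first two lines and the last two or three; this keeps the arithmetic uniform. For each case I would then evaluate the three pieces of $\sum m_i$ and $\sum m_i^2$: the pure-$a^2$ part is a sum of $j^2$ times a line-length, the pure-$d^2$ part is a sum of squares of a near-complete residue system (so it collapses to $\frac{(a+K)(a+K+1)(2a+2K+1)}{6}$ minus small corrections), and the cross term $2ad\sum_j j\cdot(\text{sum of multipliers on line }j)$ is a weighted sum of arithmetic-progression sums. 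Each of these is a polynomial identity in $a$ after substituting $q=(a+K-r)/k$, so the remaining work is expansion and collection of terms — tedious but mechanical, and I would present it by stating the three intermediate closed forms (as the excerpt does) and then the final substitution into Lemma~\ref{lem1}.

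The genuine obstacle is twofold. First, correctly describing the table in each of the four cases: the interaction between the fixed internal gap (of width $k-K$, sitting between the ``$a+k$'' column and the ``$a+1$'' column because the forbidden residues $k+1,\dots,2K+1$ in multiples of $d$ translate to $a+k+1,\dots,a+2K+1$) and the moving wrap-around at column $r$ produces qualitatively different line shapes, and an off-by-one in a block length propagates into every term of the final polynomial. I would double-check each table by verifying (i) the total element count is $a-1$, (ii) the multipliers of $d$ listed do form $\{1,\dots,a-1\}\pmod a$, and (iii) consistency with the Frobenius number $g=\frac{a(a+K-r)}{k}+(a+2K+1)d$ (resp.\ with the $+k$ shift in Cases 3--4), which is just $m_{\max}-a$ and is easy to read off. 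Second, the algebra: since the three Sylvester-sum formulas differ only in low-order coefficients, I would verify them by numerical spot-checks (small $a,d,K,k$ with $K>(k-1)/2$, e.g.\ the sequence $a,a+4,a+5,a+6$ with $K=3,k=5$) against a brute-force enumeration of $\mathrm{NR}$, which is exactly the sanity check the examples section performs for Theorem~\ref{th:aKk}. Once the Apéry tables are pinned down, Lemma~\ref{lem1} does the rest automatically.
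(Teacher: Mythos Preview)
Your proposal is correct and follows essentially the same approach as the paper: determine the Ap\'ery set explicitly by writing out the ``table'' of minimal representatives line by line, split into the four cases on $r$ (with $r=0$ absorbed into the first case), compute $\sum m_i$ and $\sum m_i^2$ in each case, and substitute into the third formula of Lemma~\ref{lem1} after replacing $q=(a+K-r)/k$. One small slip: your spot-check example $a,a+4,a+5,a+6$ has $K=3$, $k=6$ (not $k=5$), which is indeed the paper's first worked special pattern for this theorem.
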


Concerning Frobenius and Sylvester numbers, we have the following. $a,d,K,k,r$ are determined as in Theorem \ref{th:ss456}.   

\begin{theorem} 
Under the same conditions as in Theorem \ref{th:ss456}, we have  
\begin{align*}
&g\bigl(a,a+(K+1)d,a+(K+2)d,\dots,a+k d\bigr)\\
&=\begin{cases} 
\dfrac{a(a+K-r)}{k}+(a+2 K+1)d&\text{if $0\le r\le k-K-1$}\\
\dfrac{a(a+K-r+k)}{k}+(a+2 K+1)d&\text{if $k-K\le r<k$}\,. 
\end{cases}
\end{align*}  
\label{th:gg456} 
\end{theorem}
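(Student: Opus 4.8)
The plan is to apply the first formula of Lemma~\ref{lem1}, which turns the evaluation of $g\bigl(a,a+(K+1)d,a+(K+2)d,\dots,a+kd\bigr)$ into locating $\max_{1\le i\le a-1}m_i$ and subtracting $a$. Since the Ap\'ery set has already been displayed explicitly, case by case, in the four tables of this section (the cases $r=0$; $1\le r\le k-K-1$; $k-K\le r\le K$; $K+1\le r<k$), the only remaining task is to read off the largest entry of each table. First I would establish a general monotonicity fact about these tables: along any row the coefficient of $a$ is constant and the coefficient of $d$ increases from left to right, while passing from one row to the next the coefficient of $a$ strictly increases, and the first entry of a row always exceeds the last entry of the row above it (the difference being of the form $a+cd$ with $c\ge 1$, as one checks from the explicit entries using $2K\ge k$ and $K<k$). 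It follows that every entry of a lower row exceeds every entry of a higher row, so $\max_i m_i$ is simply the rightmost entry of the last non-empty row of the table.

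With that in hand, the argument splits exactly as in the statement. When $0\le r\le k-K-1$ --- that is, in Cases~1 and~2 above --- the last non-empty row is the $(q+1)$-th, and its rightmost entry is $(q+1)a+(a+2K+1)d$; one only needs $a+2K+1>qk+r=a+K$ to see that it dominates the earlier entries of that row. Lemma~\ref{lem1} then gives $g=(q+1)a+(a+2K+1)d-a=qa+(a+2K+1)d$, and substituting $q=(a+K-r)/k$, legitimate by~(\ref{eq:qr}), yields the first branch $\frac{a(a+K-r)}{k}+(a+2K+1)d$. When $k-K\le r<k$ --- Cases~3 and~4 above --- the table carries an additional $(q+2)$-th row, non-empty precisely because $r\ge k-K$, whose rightmost entry is $(q+2)a+(a+2K+1)d$; here one checks that the first entry of this row exceeds the last entry of the $(q+1)$-th row (the difference is $a+d$ in Case~3 and $a+(k+1-K)d$ in Case~4, both positive). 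Lemma~\ref{lem1} then gives $g=(q+2)a+(a+2K+1)d-a=(q+1)a+(a+2K+1)d$, and $(q+1)a=qa+a=\frac{a(a+K-r)}{k}+a=\frac{a(a+K-r+k)}{k}$ produces the second branch.

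Everything else --- that the entries of each table form a complete set of residues modulo $a$ other than $0$, that each entry is representable by $a+(K+1)d,\dots,a+kd$, and that none of them remains representable by $a+(K+1)d,\dots,a+kd$ after subtracting $a$ --- was already verified when the tables were introduced, and may simply be cited here. Thus the genuinely new content is the monotonicity remark of the first paragraph together with the handful of one-line inequalities used above. I expect the main (and essentially only) obstacle to be organizational: verifying uniformly across the four cases, in particular for the partially filled rows near the bottom of each table, that the claimed rightmost entry of the last row is not beaten by some entry of a row above it. This is dispatched once and for all by the observation that the coefficient of $a$ alone already strictly orders the rows, so no entry carrying a smaller coefficient of $a$ can be maximal, while within the final row the coefficient of $d$ is largest at the right end.
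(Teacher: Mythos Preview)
Your proposal is correct and follows essentially the same approach as the paper: in each of the four cases the paper simply reads off the largest entry of the displayed Ap\'ery table and applies the first formula of Lemma~\ref{lem1}, without spelling out the monotonicity between rows; you make that step explicit via the observation that consecutive rows differ by $a+cd$ with $c\ge 1$. One caution on phrasing: your closing remark that ``the coefficient of $a$ alone already strictly orders the rows'' is only true \emph{because} you have already proved the first entry of each row exceeds the last entry of the row above --- taken at face value it would be false for general $a,d$ --- so make clear it is a summary of the earlier argument, not an independent claim.
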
 

\begin{theorem}  
Under the same conditions as in Theorem \ref{th:ss456}, if $0\le r\le k-K-1$, then 
\begin{align*}  
&n\bigl(a,a+(K+1)d,a+(K+2)d,\dots,a+k d\bigr)\\
&=\frac{a^2+2(3 K-k)a+K(5 K-7 k)+k(2 k-1)-r(r+4 K-3 k)}{2 k}\\
&\quad +\left(\frac{a+1}{2}+3 K-k\right)d\,. 
\end{align*} 
If $k-K\le r\le K$, then 
\begin{align*}  
&n\bigl(a,a+(K+1)d,a+(K+2)d,\dots,a+k d\bigr)\\
&=\frac{a^2+2(3 K-k)a+(K-r)(5 K-5 k+r)+k}{2 k}\\
&\quad +\left(\frac{a+1}{2}+3 K-k\right)d\,. 
\end{align*}
If $K+1\le r<k$, then 
\begin{align*}  
&n\bigl(a,a+(K+1)d,a+(K+2)d,\dots,a+k d\bigr)\\
&=\frac{a^2+2(3 K-k)a+K(5 K-3 k)-k-r(r+4 K-3 k)}{2 k}\\
&\quad +\left(\frac{a+1}{2}+3 K-k\right)d\,. 
\end{align*} 
\label{th:nn456} 
\end{theorem}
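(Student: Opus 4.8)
The plan is to derive Theorem~\ref{th:nn456} as a direct corollary of the Ap\'ery-set analysis already carried out in Cases~1--4 of Section~3, using the second formula of Lemma~\ref{lem1}. The crucial observation is that in each of the four cases the paper has already displayed a closed form for $\sum_{i=1}^{a-1}m_i$, so the only work remaining is to substitute this into
$$
n(a_1,\dots,a_k)=\frac{1}{a}\sum_{i=1}^{a-1}m_i-\frac{a-1}{2}
$$
and simplify, keeping in mind the identification $q=\fl{(a+K)/k}=(a+K-r)/k$ from \eqref{eq:qr}. Since Case~1 ($r=0$) was shown to be subsumed by Case~2 (the formula there ``also holds for $r=0$''), the three remaining regimes $0\le r\le k-K-1$, $k-K\le r\le K$, and $K+1\le r<k$ correspond exactly to the three displayed formulas in the statement.

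The key steps, in order, are as follows. \emph{First}, recall from Case~2 that $\sum_{i=1}^{a-1}m_i=\frac{a(a^2+(6K-k)a+K(5K-7k)+2k(k-1)-r(r+4K-3k))}{2k}+\bigl(\frac{a+1}{2}+3K-k\bigr)ad$; dividing by $a$ and subtracting $\frac{a-1}{2}$ collapses the first block to $\frac{a^2+(6K-k)a+K(5K-7k)+2k(k-1)-r(r+4K-3k)}{2k}-\frac{a-1}{2}$, and combining the two rational terms over the common denominator $2k$ produces the numerator $a^2+2(3K-k)a+K(5K-7k)+k(2k-1)-r(r+4K-3k)$, since $-\frac{a-1}{2}=\frac{-ka+k}{2k}$ and $(6K-k)a-ka=2(3K-k)a$ while $2k(k-1)+k=k(2k-1)$; the surviving $d$-term is $\bigl(\frac{a+1}{2}+3K-k\bigr)d$ unchanged. \emph{Second}, repeat verbatim with the Case~3 expression, where the combinatorial count gives $\sum_{i=1}^{a-1}m_i=\frac{a(a^2+(6K-k)a+(K-r)(5K-5k+r))}{2k}+\bigl(\frac{a+1}{2}+3K-k\bigr)ad$; here after subtracting $\frac{a-1}{2}$ the constant part of the numerator becomes $(K-r)(5K-5k+r)+k$, matching the second displayed formula. \emph{Third}, do the same for Case~4, whose count yields $\sum_{i=1}^{a-1}m_i=\frac{a(a^2+(6K-k)a+K(5K-3k)-2k-r(r+4K-3k))}{2k}+\bigl(\frac{a+1}{2}+3K-k\bigr)ad$, giving constant part $K(5K-3k)-2k+k=K(5K-3k)-k$, which is the third formula.

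There is essentially no obstacle of substance here: the hard analytic work—namely, establishing that the tables in Cases~1--4 genuinely describe ${\rm Ape}(A,a)$ (that every listed element is representable by $a+(K+1)d,\dots,a+kd$, that none becomes representable after subtracting $a$, and that the excluded positions are precisely the multiples of $a$)—has already been performed in Section~3 and may be invoked. The only thing to be careful about is bookkeeping: making sure the case boundaries $r=0,\,r=k-K,\,r=K,\,r=K+1$ are assigned to the correct formula (in particular that $r=0$ falls under the first formula and that the regime $k-K\le r<k$ in Theorem~\ref{th:gg456} is split at $r=K$ here), and that the term $\bigl(\frac{a+1}{2}+3K-k\bigr)d$ is not absorbed into the fraction but left as the separate $d$-summand exactly as written. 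Hence the proof reduces to three short algebraic simplifications, and one writes: ``If $0\le r\le k-K-1$, by the second formula in Lemma~\ref{lem1} together with $q=(a+K-r)/k$ and the expression for $\sum_{i=1}^{a-1}m_i$ obtained in Case~2 of this section, we get the first formula; the remaining two cases follow identically from the expressions in Cases~3 and~4, respectively.''
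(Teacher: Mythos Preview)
Your proposal is correct and follows exactly the paper's own route: in each of Cases~2--4 of Section~3 the paper has already recorded the closed form for $\sum_{i=1}^{a-1}m_i$ and then applied the second formula of Lemma~\ref{lem1} to read off $n(\cdot)$, and Theorem~\ref{th:nn456} simply collects those three outputs. Your algebraic check of the simplifications (combining $-\frac{a-1}{2}=\frac{-k(a-1)}{2k}$ with the rational part) is accurate in all three cases.
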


\subsection{Special patterns}   

For an integer $a\ge 2$, let us consider the sequence $a,a+4,a+5,a+6$. Then, we apply the Theorem \ref{th:ss456} as $K=3$, $k=6$ and $d=1$, and nonnegative integers $q$ and $r$ are determined by $a+3=6 q+r$ with $0\le r\le 5$. $q\ge 2$ implies that $a\ge 9$.  
When $r=0,1,2$, that is, $a\equiv 3,4,5\pmod 6$, we have  
\begin{align*}
&s(a,a+4,a+5,a+6)\\
&=\frac{1}{216}\bigl(a^4+21 a^3-3(r^2-6 r-66)a^2+(2 r^3-45 r^2+162 r+927)r\\
&\qquad +3(2 r^3-39 r^2+78 r+495)\,. 
\end{align*}
When $r=3$, that is, $a\equiv 0\pmod 6$, we have 
\begin{align*}
&s(a,a+4,a+5,a+6)\\
&=\frac{1}{216}\bigl(a^4+21 a^3-3(r^2-18 r-42)a^2+(2 r^3-81 r^2+774 r-153)r\\
&\qquad +3(2 r^3-75 r^2+762 r-793)\,. 
\end{align*}
When $r=4,5$, that is, $a\equiv 1,2\pmod 6$, we have  
\begin{align*}
&s(a,a+4,a+5,a+6)\\
&=\frac{1}{216}\bigl(a^4+21 a^3-3(r^2-6 r-66)a^2+(2 r^3-45 r^2+162 r+1143)r\\
&\qquad +3(2 r^3-39 r^2+78 r+999)\,. 
\end{align*} 
The final result also holds for $q=1$, that is, $a=8$.

\begin{Cor}  
For $a\ge 8$, we have 
$$
s(a,a+4,a+5,a+6)=\begin{cases}
\frac{a^4+21 a^3+261 a^2+1494 a+2808}{216}&\text{if $a\equiv 0\pmod 6$}\\
\frac{a^4+21 a^3+222 a^2+1199 a+2445}{216}&\text{if $a\equiv 1\pmod 6$}\\
\frac{a^4+21 a^3+213 a^2+1078 a+1992}{216}&\text{if $a\equiv 2\pmod 6$}\\
\frac{(a+3)(a^3+18 a^2+144 a+495)}{216}&\text{if $a\equiv 3\pmod 6$}\\
\frac{a^4+21 a^3+213 a^2+1046 a+1608}{216}&\text{if $a\equiv 4\pmod 6$}\\
\frac{a^4+21 a^3+222 a^2+1087 a+1533}{216}&\text{if $a\equiv 5\pmod 6$}\,. 
\end{cases}
$$ 
\label{cor:456} 
\end{Cor}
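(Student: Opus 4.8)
The plan is to derive Corollary \ref{cor:456} as the instance $K=3$, $k=6$, $d=1$ of Theorem \ref{th:ss456}, together with one direct computation at the single boundary value $a=8$. First I would verify that the hypotheses of Theorem \ref{th:ss456} hold for this choice: the inequality $(k-1)/2<K\le(2k-2)/3$ becomes $5/2<3\le 10/3$, so it is satisfied, while the remaining hypothesis $\fl{(a+K)/k}\ge 2$ reads $a+3\ge 12$, i.e. $a\ge 9$. Hence the theorem applies verbatim for every $a\ge 9$, and only $a=8$ will require separate treatment.

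Next I would translate the parameter $r$ into a congruence condition on $a$. Since $a+3=6q+r$ with $0\le r<6$, the number $r$ is the least nonnegative residue of $a+3$ modulo $6$, so $r=0,1,2,3,4,5$ corresponds exactly to $a\equiv 3,4,5,0,1,2\pmod 6$. Matching this against the three ranges in Theorem \ref{th:ss456}: the range $0\le r\le k-K-1=2$ covers $a\equiv 3,4,5\pmod 6$, the range $k-K\le r\le K$ collapses to the single value $r=3$ (so $a\equiv 0\pmod 6$), and the range $K+1\le r<k$ covers $a\equiv 1,2\pmod 6$. For each residue class I would substitute $K=3$, $k=6$, $d=1$ and the relevant value of $r$ into the corresponding displayed formula of Theorem \ref{th:ss456} (equivalently, into the Ap\'ery-set sums $\sum m_i$ and $\sum m_i^2$ computed in its proof, followed by the third formula of Lemma \ref{lem1}), and then collect the result into a polynomial in $a$; in the class $a\equiv 3\pmod 6$ the common numerator carries a factor $a+3$, producing the factored form in the statement, and similarly $a+2$, $a+6$ factor out in the remaining classes, which accounts for the mixed factored/expanded presentation. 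This amounts to a handful of routine polynomial simplifications; the three intermediate expressions displayed just before the corollary are exactly these substitutions with $r$ kept symbolic and then evaluated.

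Finally I would handle $a=8$, where $q=\fl{11/6}=1$ violates the hypothesis $q\ge 2$, so Theorem \ref{th:ss456} does not apply. Here I would compute the Ap\'ery set ${\rm Ape}(\{8,12,13,14\},8)$ directly: besides $m_0=0$ its elements are $12,13,14$ (covering residues $4,5,6$) and $25=12+13$, $26=12+14$, $27=13+14$, $39=12+13+14$ (covering residues $1,2,3,7$), since no smaller element of the semigroup lies in the remaining residue classes modulo $8$. Substituting $\sum m_i$ and $\sum m_i^2$ into the third formula of Lemma \ref{lem1} gives $s(8,12,13,14)=181$, which coincides with the $a\equiv 2\pmod 6$ entry of the corollary evaluated at $a=8$. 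This finishes the proof.

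I expect the genuinely delicate steps to be two. The first is the bookkeeping involved in aligning each residue class of $a$ modulo $6$ with the correct one of the three subcases of Theorem \ref{th:ss456}; the ranges overlap only at their endpoints, but the translation through $r=((a+3)\bmod 6)$ is easy to slip on. The second, and the only point lying outside Theorem \ref{th:ss456}, is the exceptional value $a=8$ (i.e. $q=1$), which must be checked by returning to the Ap\'ery set rather than to the theorem. The polynomial algebra in between is lengthy but entirely mechanical.
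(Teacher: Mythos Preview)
Your proposal is correct and follows the same route as the paper: specialize Theorem \ref{th:ss456} to $K=3$, $k=6$, $d=1$, split according to the three $r$-ranges, and then check $a=8$ separately; the paper simply asserts that the formula ``also holds for $q=1$, that is, $a=8$'' while you supply the Ap\'ery-set computation explicitly. Your aside about factors $a+2$ and $a+6$ appearing in the other residue classes is inaccurate (only the $a\equiv 3\pmod 6$ numerator is presented in factored form in the corollary), but this is cosmetic and does not affect the argument.
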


By applying Theorem \ref{th:gg456} as $K=3$, $k=6$ and $d=1$, for $r=0,1,2$, that is, $a\equiv 3,4,5\pmod 6$, we have 
$$ 
g(a,a+4,a+5,a+6)=\frac{a^2+(9-r)a+42}{6}\,, 
$$
and for $r=3,4,5$, that is, $a\equiv 0,1,2\pmod 6$, we have 
$$
g(a,a+4,a+5,a+6)=\frac{a^2+(15-r)a+42}{6}\,. 
$$ 
The case for $a=8$ is also valid.  
We can conclude that 
$$
g(a,a+4,a+5,a+6)=\begin{cases}
\frac{a^2+12 a+42}{6}&\text{if $a\equiv 0\pmod 6$}\\
\frac{a^2+11 a+42}{6}&\text{if $a\equiv 1\pmod 6$}\\
\frac{a^2+10 a+42}{6}&\text{if $a\equiv 2\pmod 6$}\\
\frac{a^2+9 a+42}{6}&\text{if $a\equiv 3\pmod 6$}\\
\frac{a^2+8 a+42}{6}&\text{if $a\equiv 4\pmod 6$}\\
\frac{a^2+7 a+42}{6}&\text{if $a\equiv 5\pmod 6$}\,. 
\end{cases}
$$ 
The coefficient of $a$ can be unified by using the floor function. 

\begin{Cor}  
For $a\ge 8$, we have 
$$
g(a,a+4,a+5,a+6)=\left(2+\fl{\frac{a}{6}}\right)a+7\,.
$$ 
\label{cor:456fn} 
\end{Cor}

By applying Theorem \ref{th:nn456}, for $r=0,1,2$, that is, $a\equiv 3,4,5\pmod 6$, we have 
$$
n(a,a+4,a+5,a+6)=\frac{a^2+12 a-(r+3)(r-9)}{12}\,, 
$$ 
for $r=3$, that is, $a\equiv 0\pmod 6$, we have  
$$
n(a,a+4,a+5,a+6)=\frac{a^2+12 a-r^2+18 r+3}{12}\,, 
$$ 
for $r=4,5$, that is, $a\equiv 1,2\pmod 6$, we have  
$$
n(a,a+4,a+5,a+6)=\frac{a^2+12 a-r^2+6 r+27}{12}\,.  
$$ 
Finally, we can check manually that the result is also valid for $a=4,5,6$.

\begin{Cor}  
For $a=4,5,6$ and $a\ge 8$, we have 
$$
n(a,a+4,a+5,a+6)=\begin{cases}
\frac{a^2+12 a+48}{12}&\text{if $a\equiv 0\pmod 6$}\\
\frac{a^2+12 a+35}{12}&\text{if $a\equiv 1\pmod 6$}\\
\frac{a^2+12 a+32}{12}&\text{if $a\equiv 2\pmod 6$}\\
\frac{a^2+12 a+27}{12}&\text{if $a\equiv 3\pmod 6$}\\
\frac{a^2+12 a+32}{12}&\text{if $a\equiv 4\pmod 6$}\\
\frac{a^2+12 a+35}{12}&\text{if $a\equiv 5\pmod 6$}\,. 
\end{cases}
$$ 
\label{cor:456sn} 
\end{Cor}

\noindent 
{\it Remark.}  
The coefficient of the constant term can be unified by using the floor function. 
\begin{align*}
&n(a,a+4,a+5,a+6)\\
&=\frac{a^2}{12}+a\\ 
&\quad +\frac{1}{12\cdot 15}\biggl(720+209 a-14\fl{\frac{a}{6}}-164\fl{\frac{a+1}{6}}-134\fl{\frac{a+2}{6}}\\
&\qquad -284\fl{\frac{a+3}{6}}-254\fl{\frac{a+4}{6}}-404\fl{\frac{a+5}{6}}\biggr)\\
&=\frac{a^2}{12}+\frac{389}{180}a+4\\
&\quad -\frac{1}{90}\biggl(7\fl{\frac{a}{6}}+82\fl{\frac{a+1}{6}}+67\fl{\frac{a+2}{6}}\\
&\qquad +142\fl{\frac{a+3}{6}}+127\fl{\frac{a+4}{6}}+202\fl{\frac{a+5}{6}}\biggr)\,. 
\end{align*}
\bigskip

For an integer $a\ge 2$, let us consider the sequence $a,a+5,a+6,a+7$. Then, $K=4$, $k=7$ and $d=1$ in Theorem \ref{th:ss456}.  Nonnegative integers $q$ and $r$ are determined by $a+4=7 q+r$ with $0\le r\le 6$. $q\ge 2$ implies that $a\ge 10$. 
When $r=0,1,2$, that is, $a\equiv 3,4,5\pmod 7$, we have  
\begin{align*}
&s(a,a+5,a+6,a+7)\\
&=\frac{1}{294}\bigl(a^4+29 a^3-(3 r^2-15 r-380)a^2+(2 r^3-51 r^2+151 r+2296)r\\
&\qquad +7(r^3-24 r^2+17 r+672)\,. 
\end{align*}
When $r=3,4$, that is, $a\equiv 6,0\pmod 7$, we have 
\begin{align*}
&s(a,a+5,a+6,a+7)\\
&=\frac{1}{294}\bigl(a^4+29 a^3-(3 r^2-57 r-296)a^2+(2 r^3-93 r^2+991 r+784)r\\
&\qquad +7(r^3-45 r^2+500 r-210)\,. 
\end{align*}
When $r=5,6$, that is, $a\equiv 1,2\pmod 7$, we have  
\begin{align*}
&s(a,a+5,a+6,a+7)\\
&=\frac{1}{294}\bigl(a^4+29 a^3-(3 r^2-15 r-422)a^2+(2 r^3-51 r^2+109 r+3346)r\\
&\qquad +7(r^3-24 r^2+17 r+1386)\,. 
\end{align*} 
We can check manually that the result is also valid for $a=5,6,7$.   

\begin{Cor}  
For $a=5,6,7$ and $a\ge 10$, we have 
$$
s(a,a+5,a+6,a+7)=\begin{cases}
\frac{a^4+29 a^3+476 a^2+3388 a+7938}{294}&\text{if $a\equiv 0\pmod 7$}\\
\frac{a^4+29 a^3+422 a^2+2866 a+6972}{294}&\text{if $a\equiv 1\pmod 7$}\\
\frac{a^4+29 a^3+404 a^2+2596 a+5880}{294}&\text{if $a\equiv 2\pmod 7$}\\
\frac{a^4+29 a^3+380 a^2+2296 a+4704}{294}&\text{if $a\equiv 3\pmod 7$}\\
\frac{a^4+29 a^3+392 a^2+2398 a+4662}{294}&\text{if $a\equiv 4\pmod 7$}\\
\frac{a^4+29 a^3+398 a^2+2410 a+4326}{294}&\text{if $a\equiv 5\pmod 7$}\\
\frac{a^4+29 a^3+440 a^2+2974 a+6384}{294}&\text{if $a\equiv 6\pmod 7$}\,. 
\end{cases}
$$ 
\label{cor:567} 
\end{Cor}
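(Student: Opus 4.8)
\textit{Proof proposal.}
The plan is to obtain the Corollary purely by specializing Theorem~\ref{th:ss456} (equivalently, the three $r$-parametrized displays printed just before the statement) to $K=4$, $k=7$, $d=1$. First I check the hypotheses of that theorem: $(k-1)/2<K\le(2k-2)/3$ becomes $3<4\le 4$, which holds; the defining relation $a+K=qk+r$ becomes $a+4=7q+r$ with $0\le r\le 6$ and $q=\fl{(a+4)/7}$; and since $a\equiv r-4\equiv r+3\pmod 7$, the residue of $a$ pins down $r$, the values $r=0,1,2,3,4,5,6$ corresponding to $a\equiv 3,4,5,6,0,1,2\pmod 7$ respectively.

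The three branches of Theorem~\ref{th:ss456} then translate as $0\le r\le k-K-1=2$ (i.e.\ $a\equiv 3,4,5$), $k-K=3\le r\le K=4$ (i.e.\ $a\equiv 6,0$), and $K+1=5\le r\le k-1=6$ (i.e.\ $a\equiv 1,2$). In each branch I substitute $K=4$, $k=7$, $d=1$: because $d=1$ the $d^2$- and $d$-blocks simply fold into the main block, the prefactor $1/(12k^2)=1/588$ loses a factor $2$ against the necessarily even numerator, and what remains is exactly the three $r$-displays preceding the Corollary. Plugging into each of these the concrete value of $r$ attached to the seven residue classes and expanding yields the seven numerators over $294$; the $r$-independent leading part $a^4+29a^3$ (its coefficients coming out as $2/588=1/294$ and $(30+28)/588=29/294$) is a convenient consistency check across all seven cases.

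There remains the hypothesis $q=\fl{(a+4)/7}\ge 2$, which forces $a\ge 10$, so the instances $a=5,6,7$ (and also, though they lie outside the stated range, $a=8,9$) are not covered by the theorem; the three values $a=5,6,7$ in the Corollary must be verified by hand. For each I would compute the Ap\'ery set $\mathrm{Ape}(\{a,a+5,a+6,a+7\},a)$ directly, or equivalently enumerate ${\rm NR}(a,a+5,a+6,a+7)$ and sum it, then compare with the claimed value for $a\equiv 5,6,0\pmod 7$; for instance, for $a=5$ the gaps are $1,2,3,4,6,7,8,9,13,14,18,19$, whose sum $104$ matches the $a\equiv 5$ formula.

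The main obstacle is purely clerical: reconciling the three-case description of Theorem~\ref{th:ss456}, indexed by $r$, with the seven-case statement, indexed by $a\bmod 7$, forces each branch to be split (into three, two, and two residue classes) and each substitution $r\mapsto 0,\dots,6$ to be carried out and expanded without arithmetic slip. The most error-prone spot is the middle branch, where $r=3$ and $r=4$ must both be read off the single $k-K\le r\le K$ formula and produce the two superficially unrelated numerators attached to $a\equiv 6$ and $a\equiv 0$. Everything else is routine polynomial specialization, and only the small cases $a=5,6,7$ require an argument outside the scope of Theorem~\ref{th:ss456}.
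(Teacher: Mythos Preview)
Your proposal is correct and follows essentially the same route as the paper: specialize Theorem~\ref{th:ss456} with $K=4$, $k=7$, $d=1$, split into the three $r$-ranges (already displayed in the text immediately preceding the Corollary), substitute the seven values of $r$, and check $a=5,6,7$ by hand. Your explicit residue correspondence $r\leftrightarrow a\bmod 7$ and the worked verification for $a=5$ are slightly more detailed than what the paper records, but the argument is the same.
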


By applying Theorem \ref{th:gg456} as $K=4$, $k=7$ and $d=1$, for $r=0,1,2$, that is, $a\equiv 3,4,5\pmod 7$, we have 
$$ 
g(a,a+5,a+6,a+7)=\frac{a^2+(11-r)a+63}{7}\,, 
$$
and for $r=3,4,5,6$, that is, $a\equiv 6,0,1,2\pmod 7$, we have 
$$
g(a,a+5,a+6,a+7)=\frac{a^2+(18-r)a+63}{7}\,. 
$$ 
The cases for $a=5,6,7,8$ are also valid.  
The coefficient of $a$ can be unified by using the floor function. 

\begin{Cor}  
For $a=5,6,7,8$ and $a\ge 10$, we have 
$$
g(a,a+5,a+6,a+7)=\left(2+\fl{\frac{a+1}{7}}\right)a+9\,.
$$ 
\label{cor:567fn} 
\end{Cor}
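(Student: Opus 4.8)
The plan is to specialize Theorem~\ref{th:gg456} to $K=4$, $k=7$, $d=1$ (so that $a+(K+1)d=a+5$ and $a+kd=a+7$) and then collapse its two-branch formula into a single floor expression, handling the small values $a\in\{5,6,7,8\}$ separately at the end.

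First I would verify the hypotheses of Theorem~\ref{th:gg456}. With $k=7$ we have $(k-1)/2=3<4=K$ and $(2k-2)/3=4\ge K$, so the condition $(k-1)/2<K\le(2k-2)/3$ holds, and $\gcd(a,d)=\gcd(a,1)=1$ is automatic. The remaining hypothesis $q:=\fl{(a+K)/k}=\fl{(a+4)/7}\ge 2$ is precisely $a\ge 10$, which is why the four values $a=5,6,7,8$ (where $q=1$) must be treated by hand.

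For $a\ge 10$, write $a+4=7q+r$ with $0\le r\le 6$, so that $a=7q+r-4$. Since $k-K-1=2$ and $2K+1=9$, Theorem~\ref{th:gg456} specializes to
\[
g(a,a+5,a+6,a+7)=\begin{cases}\dfrac{a(a+4-r)}{7}+a+9 & \text{if }0\le r\le 2,\\[1ex]\dfrac{a(a+11-r)}{7}+a+9 & \text{if }3\le r\le 6.\end{cases}
\]
In the first branch $a+4-r=7q$, hence $g=aq+a+9=(q+1)a+9$; in the second branch $a+11-r=7(q+1)$, hence $g=(q+1)a+a+9=(q+2)a+9$. It then remains to recognize the coefficient of $a$ as $2+\fl{(a+1)/7}$: from $a+1=7q+(r-3)$ one gets $\fl{(a+1)/7}=q$ when $3\le r\le 6$ (so $0\le r-3\le 3$), and $\fl{(a+1)/7}=q-1$ when $0\le r\le 2$ (writing $a+1=7(q-1)+(r+4)$ with $4\le r+4\le 6$). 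In either case $2+\fl{(a+1)/7}$ equals the coefficient of $a$ just found, which yields the stated identity for $a\ge 10$.

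Finally, for $a\in\{5,6,7,8\}$ I would compute the Ap\'ery set ${\rm Ape}(A,a)$ of $A=\{a,a+5,a+6,a+7\}$ directly and apply the first formula in Lemma~\ref{lem1}, $g=\bigl(\max_{1\le i\le a-1}m_i\bigr)-a$, checking agreement with $\bigl(2+\fl{(a+1)/7}\bigr)a+9$; for instance when $a=5$ one finds ${\rm Ape}(A,5)=\{0,11,12,23,24\}$, so $g=24-5=19=(2+0)\cdot 5+9$. I do not expect a genuine obstacle: all the substance is in Theorem~\ref{th:gg456}, and the only delicate point is the bookkeeping that converts the branch condition $r\le k-K-1$ versus $r\ge k-K$ into one floor function, together with the boundary verification at $q=1$, which the hypothesis $q\ge 2$ excludes.
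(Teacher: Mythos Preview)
Your proposal is correct and follows essentially the same route as the paper: specialize Theorem~\ref{th:gg456} with $K=4$, $k=7$, $d=1$, split on $r\le 2$ versus $r\ge 3$, and unify via $\fl{(a+1)/7}$, then verify $a\in\{5,6,7,8\}$ by hand. The only minor differences are that you spell out the floor-function bookkeeping in detail (the paper merely asserts the unification) and that you verify the small cases by computing Ap\'ery sets directly, whereas the paper just says the branch formulas remain valid there; your way is arguably safer, since the $q\ge 2$ hypothesis genuinely fails at $q=1$ and, as the excluded case $a=9$ shows, the formula can break.
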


By applying Theorem \ref{th:nn456}, for $r=0,1,2$, that is, $a\equiv 3,4,5\pmod 7$, we have 
$$
n(a,a+5,a+6,a+7)=\frac{a^2+17 a-r^2+5 r+52}{14}\,, 
$$ 
for $r=3,4$, that is, $a\equiv 6,0\pmod 7$, we have  
$$
n(a,a+5,a+6,a+7)=\frac{a^2+17 a-r^2+19 r+24}{14}\,, 
$$ 
for $r=5,6$, that is, $a\equiv 1,2\pmod 7$, we have  
$$
n(a,a+5,a+6,a+7)=\frac{a^2+17 a-r^2+5 r+66}{14}\,.  
$$ 
Finally, we can check manually that the result is also valid for $a=5,6,7$.

\begin{Cor}  
For $a=5,6,7$ and $a\ge 10$, we have 
$$
n(a,a+4,a+5,a+6)=\begin{cases}
\frac{a^2+17 a+84}{14}&\text{if $a\equiv 0\pmod 7$}\\
\frac{a^2+17 a+66}{14}&\text{if $a\equiv 1\pmod 7$}\\
\frac{a^2+17 a+60}{14}&\text{if $a\equiv 2\pmod 7$}\\
\frac{a^2+17 a+52}{14}&\text{if $a\equiv 3\pmod 7$}\\
\frac{a^2+17 a+56}{14}&\text{if $a\equiv 4\pmod 7$}\\
\frac{a^2+17 a+58}{14}&\text{if $a\equiv 5\pmod 7$}\\
\frac{a^2+17 a+72}{14}&\text{if $a\equiv 6\pmod 7$}\,. 
\end{cases}
$$ 
\label{cor:567sn} 
\end{Cor}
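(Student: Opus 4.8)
The plan is to obtain the table as a direct specialization of Theorem \ref{th:nn456}, together with a finite hand-check for the three small values $a=5,6,7$, which lie below the standing hypothesis $\fl{(a+K)/k}\ge 2$.

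First I would verify that Theorem \ref{th:nn456} applies with $K=4$, $k=7$, $d=1$: indeed $(k-1)/2=3<4=K\le 4=(2k-2)/3$, and $q=\fl{(a+4)/7}\ge 2$ holds precisely when $a\ge 10$. Writing $a+4=7q+r$ with $0\le r\le 6$, one has $a\equiv r+3\pmod 7$, so $r=0,1,2,3,4,5,6$ corresponds to $a\equiv 3,4,5,6,0,1,2\pmod 7$, respectively. The three regimes of Theorem \ref{th:nn456} then read $0\le r\le k-K-1$, i.e.\ $r\in\{0,1,2\}$; $k-K\le r\le K$, i.e.\ $r\in\{3,4\}$; and $K+1\le r<k$, i.e.\ $r\in\{5,6\}$; these are disjoint and exhaust $\{0,1,\dots,6\}$.

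Next I would substitute $K=4$, $k=7$, $d=1$ into each of the three displayed formulas for $n$ in Theorem \ref{th:nn456} and collapse them. A short computation gives
\begin{align*}
n(a,a+5,a+6,a+7)&=\frac{a^2+17a-r^2+5r+52}{14}\qquad(r=0,1,2),\\
n(a,a+5,a+6,a+7)&=\frac{a^2+17a-r^2+19r+24}{14}\qquad(r=3,4),\\
n(a,a+5,a+6,a+7)&=\frac{a^2+17a-r^2+5r+66}{14}\qquad(r=5,6).
\end{align*}
Inserting the seven values of $r$ and reading off the matching residue class of $a$ produces the constant terms $84,66,60,52,56,58,72$ for $a\equiv 0,1,2,3,4,5,6\pmod 7$, which is exactly the asserted table.

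It remains to treat $a=5,6,7$, where $q=1$ and Theorem \ref{th:nn456} does not directly apply. For these I would compute the Ap\'ery set ${\rm Ape}(\{a,a+5,a+6,a+7\},a)$ by hand --- only $a-1\le 6$ residue classes must be resolved --- and then invoke the second formula of Lemma \ref{lem1}; this yields $n=12,15,18$ for $a=5,6,7$, matching $\tfrac{a^2+17a+58}{14}$, $\tfrac{a^2+17a+72}{14}$, $\tfrac{a^2+17a+84}{14}$ at those points. The only real difficulty is organizational: keeping the dictionary between $r$ and $a\bmod 7$ consistent, assigning the boundary values $r=2,3$ and $r=4,5$ to the correct regime of Theorem \ref{th:nn456}, and performing the (purely mechanical) polynomial simplification without slips; no new idea is needed beyond Theorem \ref{th:nn456} itself.
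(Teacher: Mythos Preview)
Your proposal is correct and follows essentially the same route as the paper: substitute $K=4$, $k=7$, $d=1$ into the three cases of Theorem~\ref{th:nn456}, obtain the same three intermediate formulas $\frac{a^2+17a-r^2+5r+52}{14}$, $\frac{a^2+17a-r^2+19r+24}{14}$, $\frac{a^2+17a-r^2+5r+66}{14}$ for the respective ranges of $r$, and then verify $a=5,6,7$ by hand. Your write-up is in fact more explicit than the paper's, which simply records those three formulas and states that the small cases are checked manually.
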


\section{Comments}  

Similarly, we can consider the sequence $a,a+(K+1)d,a+(K+2)d,\dots,a+k d$ when $(2 k-2)/3<K\le(3 k-3)/4$. Then, as a special case, we can get Frobenius number, Sylvester number and sum for $a,a+6,a+7,a+8$ and so on. After that, we may continue to consider the cases $(3 k-3)/4<K\le(4 k-4)/5$, $(4 k-4)/5<K\le(5 k-5)/6$, $\dots$. However, the situation becomes more and more complicated. Is there any more convenient method to find their Sylvester sums? 

The approaches in \cite{ro79,PS90} may apply to any almost arithmetic sequence, but they both have an extra burden: the require the pre-computation of a couple of constants depending on the sequence. Particularizing their results to ours would be of some interest.

\end{document}